\newenvironment{customthm}[1]
  {\innercustomthm}
  {\endinnercustomthm}
\definecolor{labelkey}{rgb}{0.5,0.5,0.5}
\newtheorem{theorem}{Theorem}[section]
\newtheorem{lemma}[theorem]{Lemma}
\theoremstyle{remark}
\newtheorem{Remark}[theorem]{\bf Remark}
\newtheorem{remark}[theorem]{\bf Remark}
\numberwithin{equation}{section}
\definecolor{mygreen}{rgb}{0.0, 0.5, 0.0}
\newcommand{\blue}[1]{{\leavevmode\color{blue}#1}}
\def\de{\delta}
\def\eps{\varepsilon}
\def\la{\lambda}
\def\a{\alpha}
\def\be{\beta}
\def\ga{\gamma}
\def\part{\partial}
\newcommand{\pr}[1]{\mathbb{P}\left( #1 \right)}
\newcommand{\mean}[1]{\mathbb{E}\left[{#1}\right]}
\newcommand{\brac}[1]{\left(#1\right)}
\newcommand{\bfrac}[2]{\brac{\frac{#1}{#2}}}
\newcommand{\beq}[2]{\begin{equation}\label{#1}#2\end{equation}}
\newcommand{\ceil}[1]{\left \lceil #1 \right \rceil}
\title[Age-biased attachment graphs]{Giant descendant trees, matchings and independent sets in the age-biased attachment graphs.}
\author{Huseyin Acan}
\address{Department of Mathematics, Drexel University, Philadelphia, PA 19104}
\thanks{During the time of this research, the first coauthor was supported by NSF Fellowship (Award No.~1502650).}
\email{huseyin.acan@drexel.edu}
\author{Alan Frieze}
\address{Department of Mathematical Sciences, Carnegie Mellon University, Pittsburgh, PA 15213}
\thanks{Research of the second author supported in part by NSF Grant DMS 1661063}
\email{alan@random.math.cmu.edu}
\author{Boris Pittel}
\address{Department of Mathematics, Ohio State University, Columbus, OH 43210}
\email{bgp@math.ohio-state.edu}
\begin{document}

\begin{abstract} We study two models of an age-biased graph process: the $\delta$-version of the preferential attachment graph model (PAM) and the uniform attachment graph model (UAM), with $m$ attachments for each of incoming vertices. We show that almost surely the scaled size of a breadth-first (descendant) tree rooted at a fixed vertex converges, for $m=1$, to a limit whose distribution is a mixture of two beta-distributions and a single beta-distribution respectively, and that for $m>1$ the limit is $1$. We also analyze the likely performance of two greedy (online) algorithms, for a large matching set and a large independent set, and determine--for each model and
each greedy algorithm--both a limiting fraction of vertices involved and an almost sure convergence rate.
% for all $m\ge 1$ and
%establish the almost sure likely bounds for the size of the matching set, {\color{blue} both for the PAM graph and for the uniform attachment graph model (UAM), with $m$ attachments for each of new vertices.}
\end{abstract}

\keywords
{random, preferential attachment graphs, asymptotics}
\subjclass[2010] {05C05, 05C07, 05C30, 05C80, 60C05}

\maketitle
\section{Introduction}
It is widely accepted that graphs/networks are an inherent feature of life today. The classical models $G_{n,m}$ and $G_{n,p}$ of Erd\H{o}s and R\'enyi \cite{ER} and Gilbert \cite{Gi}, respectively, lacked some salient features of observed networks. In particular, they failed to have a degree distribution that decays polynomially. Barab\'asi and Albert \cite{BarAlb} suggested the Preferential Attachment Model (PAM) as a more realistic model of a ``real world'' network. There was a certain lack of rigour in \cite{BarAlb}, and later Bollob\'as, Riordan, Spencer and Tusn\'ady \cite{Bol1} gave a rigorous definition. 

Many properties of this model have been studied. Bollob\'as and Riordan \cite{BolRio} studied the diameter and proved that with high probability (whp)  PAM with $n$ vertices and $m>1$ attachments for every incoming vertex has diameter $\approx \log n/\log\log n$. Earlier result by Pittel \cite{Pit0} implied
that for $m=1$ whp the diameter of PAM is of exact order $\log n$.
Bollob\'as and Riordan \cite{BolRio3,BolRio4} studied the effect on component size from deleting random edges from PAM and showed that it is quite robust whp. The degree distribution was studied in Mori \cite{Mor0,Mor}, Flaxman, Frieze and Fenner \cite{FFF}, Berger, Borgs, Chayes and Saberi \cite{BerBorChaSab}. Pek\"oz, R\"ollin and  Ross \cite{PekRolRos} established convergence, with rate, of the joint
distribution of the degrees of finitely many vertices. Acan and Hitczenko \cite{AcaHit} found an alternative proof, without rate, via a memory game. Pittel 
\cite{Pit2} used the Bollob\'as-Riordan pairing model to approximate, with explicit error estimate, the degree sequence of the first $n^{m/(m+2)}$
vertices, $m\ge 1$, and proved that, for $m>1$, PAM is connected with probability $\approx 1-O((\log n)^{-(m-1)/3})$. Random walks on PAM have been considered in the work of Cooper and Frieze \cite{CF1,CF2}. In the first paper there are results on the proportion of vertices seen by a random walk on an evolving PAM and the second paper determines the asymptotic cover time of a fully evolved PAM. Frieze and Pegden \cite{one} used random walk in a ``local algorithm'' to find vertex 1, improving results of Borgs, Brautbar, Chayes, Khanna and Lucier \cite{BBCKL}. The mixing time of such a walk was analyzed in Mihail, Papadimitriou and Saberi \cite{Mihail} who showed rapid mixing.  Interpolating between Erd\H{o}s-R\'enyi and preferential attachment, Pittel \cite{Pit1} considered birth of a giant component
in a graph process $G_M$ on a fixed vertex set, when $G_{M+1}$ is obtained by inserting a new edge between vertices $i$ and $j$ with probability proportional to $[\text{deg}(i)+\delta]\cdot[\text{deg}(j)+\delta]$, with  $\delta>0$ being fixed. Confirming a conjecture of Pittel~\cite{Pit1}, Janson and Warnke~\cite{JW} recently determined the asymptotic size of the giant component in the supercritical phase in this graph model.

The previous paragraph gives a small sample of results on PAM that can be related to its role as a model of a real world network. It is safe to say that PAM has now been accepted into the pantheon of random graph models that can be studied purely from a combinatorial aspect. For example, Cooper, Klasing and Zito \cite{CKZ} studied the size of the smallest dominating set and Frieze, P\'erez-Gim\'enez, Pra\l{}at and Reiniger \cite{FGPR} studied the existence of perfect matchings and Hamilton cycles. 

 One source of our inspiration was the work of Mori \cite{Mor0, Mor}, see also Katona and M\'ori \cite{KatMor}, Hofstad \cite{Hof}. They were able to construct a family of martingales in a form of factorial products, with arguments being the degrees of individual vertices. This allowed them to analyze the limiting behavior of vertex degrees in a $\delta$-version of PAM. In this paper we construct a new factorial-type martingale
with an argument being the total size of a ``descendants'' subtree.  This is a generalization of the martingale
for $\delta=0$, found by Pittel \cite{Pit2}.

\section{Our Results}\label{sec:results}

For each of the two models, PAM and UAM, we study the descendants tree of a given vertex $v$; it is a {\it maximal\/} subtree rooted at $v$ and formed by increasing paths starting at $v$. 
The number of vertices in this subtree is a natural influence measure of vertex $v$.
We also analyze the performance of two on-line greedy algorithms, for finding a large matching set  and for a large independent set. 
We carry out this analysis in the context of the PAM graph process described in Bollob\'as~\cite{Bol1} and its extension taken from Hofstad \cite[Ch.\ 8]{Hof}, and the UAM graph process, see \cite{FGPR} and \cite{AcaPit}.

\subsection*{The PAM graph process, $\delta$-extension:}
Vertex $1$ has $m$ loops, so its degree is $2m$ initially. Recursively, vertex $t+1$ has $m$ edges, and it uses them one at a time either to connect to a vertex  $x\in [t]$ or to loop back on itself.

To be more precise, let us denote by $d_{t,i-1}(x)$ the degree of vertex $x$ just before the $i$-th edge of vertex $t+1$ arrives. Denoting by $w$ the random receiving end of the $i$-th edge emanating from vertex $t+1$, we have
\begin{equation}\label{defprob}
\mathbb P(w=x) =
\begin{cases}
\displaystyle \frac{d_{t, i-1}(x) +\delta}{(2m + \delta)t + 2i-1 + i\delta/m},			& 	\text{ if } x\in[t],\\
 & \\
\displaystyle \frac{d_{t,i-1}(t+1) +1+i\delta/m}{(2m + \delta)t + 2i-1 + i\delta/m},  		& 	\text{ if } x=t+1.
\end{cases}
\end{equation}

We will use the notation $\{G_{m,\delta}(t)\}$ for the resulting graph process. 
%In particular, for $m=1$ we have: using ``$\,|\circ$'' to indicate conditioning on prehistory,
%\begin{equation}\label{n1}
%\mathbb P\bigl(t+1\text{ selects } x|\circ\bigr)=
%\begin{cases}
%\displaystyle \frac{1+\delta}{(2+\delta)t+(1+\delta)} & \text{ if } x = t+1,\\
%&\\
%\displaystyle \frac{d_t(x)+\delta}{(2+\delta)t+(1+\delta)} & \text{ if } x \in [t].
%\end{cases}
%\end{equation}

For $m=1$, writing $d_t(x)$ for $d_{t,0}(x)$, the probabilities above can be written a little more simply
\begin{equation}\label{n1}
\pr{w=x}=
\begin{cases}
\displaystyle \frac{1+\delta}{(2+\delta)t+(1+\delta)}, & \text{ if } x = t+1,\\
&\\
\displaystyle \frac{d_t(x)+\delta}{(2+\delta)t+(1+\delta)}, & \text{ if } x \in [t].
\end{cases}
\end{equation}

Bollob\'as and Riordan~\cite{BolRio} discovered the following coupling between $\{G_{m,0}(t)\}_t$ for $m>1$ and $\{G_{1,0}(mt)\}_t$. Start with the $\{G_{1,0}(t)\}$ random process and let the vertices be $v_1,v_2,\dots$. To obtain $\{G_{m,0}(t)\}$ from $\{G_{1,0}(mt)\}$, 
\begin{enumerate}
\item collapse the first $m$ vertices $v_1,\dots,v_m$ into the first vertex $w_1$ of $G_{m,0}(t)$, the next $m$ vertices $v_{m+1},\dots,v_{2m}$ into the second vertex $w_2$ of $G_{m,0}(t)$, and so on; 
\item keep the full record of the multiple edges and loops formed by collapsing the blocks $\{v_{(i-1)m+1}, \dots, v_{im}\}$ for each $i$. 
\end{enumerate}

Doing this collapsing indefinitely we get the jointly defined Bollob\'as-Riordan graph processes $\{G_{m,0}(t)\}$ and $\{G_{1,0}(mt)\}$. 
The beauty of the $\delta$-extended Bollob\'as-Riordan model is that similarly this collapsing operation applied to the process $\{G_{1,\delta/m}(mt)\}$ delivers the process $\{G_{m,\delta}(t)\}$, Hofstad~\cite{Hof}. 

(For the reader's convenience we present the explanation in Appendix.)

\begin{remark}\label{rem:delta=-m}
Note that the process is well defined for $\delta\ge -m$ since for such $\delta$, all the probabilities defined in~\eqref{defprob} are nonnegative and add up to 1. 
For $m=1$ and $\delta=-1$, it is easy to see from \eqref{n1} that there is no loop in the graph except the loop on the first vertex.
Hence, a vertex $u>1$ starts with degree 1 and then its degree does not change since as long as $d_t(u) + \delta = 0$, the vertex cannot attract any neighbors (again from \eqref{n1}).
As a result, in this case the graph is a star centered at  vertex 1.
It follows from the above coupling that $G_{m,-m}(t)$ is also a star centered at vertex $1$ and the key problems we want to solve have trivial solutions in this extreme case.
\end{remark}

\subsection*{The UAM graph process}
Conceptually close to the preferential attachment model is the uniform attachment model (UAM). In this model, vertex $t+1$ selects uniformly at random (repetitions
allowed) $m$ vertices from the set $[t]$ and attaches itself to these vertices. (See Acan and Pittel \cite{AcaPit} for connectivity and bootstrap percolation results.)  
This model can be thought of the limit of the PAM model as $\delta\to \infty$ except that loops are not allowed in this case.

\subsection{Number of Descendants}
Fix a positive integer $r$ and let $X(t)$ denote the number of descendants of $r$ at time $t$. Here $r$ is a descendant of $r$ and $x$ is a descendant of $r=O(1)$ if and only if $x$ chooses to attach itself to at least one descendant of $r$ in Step $x$. 
In other words, if we think of the graph as a directed graph with edges oriented towards the smaller vertices, vertex $x$ is a descendant of $r$ if and only if there is a directed {\it decreasing\/}  path from $x$ to $r$. In Pittel~\cite{Pit2} $X(t)$ was proposed as an influence measure of vertex $r$ at time $t$.
 
We prove two theorems.

\begin{theorem}\label{th1}
Suppose that $m=1$ and $\delta>-1$ and set $p(t):=X(t)/t$. Then almost surely (i.e. with probability $1$), $\lim p(t)$ exists, and its distribution is the mixture of two beta-distributions, with parameters
$a=1$, $b=r-\frac{1}{2+\delta}$ and $a=\frac{1+\delta}{2+\delta}$, $b=r$, weighted by $\frac{1+\delta}{(2+\delta)r-1}$ and $\frac{(2+\delta)(r-1)}{(2+\delta)r-1}$ respectively. Consequently a.s. $\liminf_{t\to\infty}p(t)>0$.
\end{theorem}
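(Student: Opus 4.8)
The plan is to track the process $X(t)$ directly and build a martingale out of a suitably chosen rising-factorial of $X(t)$, in the spirit of the Mori-type martingales referenced in the introduction and the $\delta=0$ construction of Pittel~\cite{Pit2}. First I would record the one-step law of $X$: conditionally on the state at time $t$, the new vertex $t+1$ becomes a descendant of $r$ precisely when its single edge (recall $m=1$) attaches to one of the $X(t)$ descendants, and by~\eqref{n1} this happens with probability proportional to the total degree mass of the descendant set. The crucial algebraic point is that in the $m=1$ PAM the \emph{total degree} of the descendant subtree is an affine function of its size: a subtree on $X$ vertices has a predictable number of internal edges, so its degree sum is $\alpha X + \beta$ for explicit constants depending on $\delta$ (and on whether the root $r$ carries its loop). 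Hence $\mathbb P(X(t+1)=X(t)+1\mid \mathcal F_t)=\dfrac{(2+\delta)X(t)-c}{(2+\delta)t+(1+\delta)}$ for an explicit constant $c$, and $X$ increases by at most one per step.

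Next I would look for a deterministic sequence $\gamma_t$ such that $M_t:=\gamma_t\cdot\dfrac{\Gamma(X(t)+x_0)}{\Gamma(X(t)+x_1)}$, or more transparently a product of the form $\gamma_t\prod_{j}(X(t)+\text{shift})$, is a martingale. Writing out $\mathbb E[M_{t+1}\mid\mathcal F_t]$ using the one-step law above, the requirement that the $X(t)$-dependence cancels forces a first-order recursion on $\gamma_t$, whose solution is again a ratio of Gamma functions in $t$; matching constants pins down the shifts $x_0,x_1$ in terms of $\delta$ and $r$. This $M_t$ is a nonnegative martingale, hence converges a.s.; combined with $\gamma_t\sim C\,t^{-\theta}$ (Stirling) and $X(t)\le t$, one gets that $p(t)=X(t)/t$ converges a.s.\ to some random limit $p\in[0,1]$, and that $\mathbb E[p^{\,\theta-\text{const}}]$-type moments of the limit are controlled by $M_0$. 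To get the full distribution I would in fact want a \emph{family} of martingales, one for each of two natural shifts (the two beta-distributions in the statement come from the two ``phases'': whether $r$'s incident loop is used to reach the descendant set or not, which splits the initial condition). Computing $\mathbb E[p^k]$ for all $k$ from the corresponding martingale limits and recognizing the resulting expression as the prescribed convex combination of Beta moments $\frac{B(a+k,b)}{B(a,b)}$ — with the weights $\frac{1+\delta}{(2+\delta)r-1}$ and $\frac{(2+\delta)(r-1)}{(2+\delta)r-1}$ emerging from the initial split — then identifies the law by the moment problem (the interval $[0,1]$ is bounded, so moments determine the distribution).

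Finally, for the stated consequence $\liminf_t p(t)>0$ a.s.: since $p(t)\to p$ a.s., it suffices to show $p>0$ a.s., i.e.\ that the limiting Beta mixture has no atom at $0$. Both mixands are genuine Beta laws with positive first parameter ($a=1$ and $a=\frac{1+\delta}{2+\delta}>0$ for $\delta>-1$), so each is absolutely continuous on $(0,1)$ with $\mathbb P(\,\cdot=0)=0$; the mixture therefore also puts zero mass at $0$, giving $p>0$ a.s.\ and hence $\liminf p(t)=p>0$ a.s.

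I expect the main obstacle to be the identification step rather than the martingale construction: getting the constants exactly right so that the a.s.\ limit's moments collapse to the advertised two-parameter Beta mixture — in particular disentangling the two phases cleanly enough to read off the weights — is where the bookkeeping is delicate. The convergence of $p(t)$ itself is comparatively routine once the nonnegative martingale is in hand.
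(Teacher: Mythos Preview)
Your proposal is correct and follows essentially the same route as the paper: the one-step law is written as $\mathbb P(X(t+1)=X(t)+1\mid\circ)=\dfrac{(2+\delta)X(t)+\gamma}{(2+\delta)t+(1+\delta)}$ with $\gamma\in\{0,-1\}$ according to whether vertex $r$ looped on itself or attached to $[r-1]$; setting $Z(t)=X(t)+\gamma/(2+\delta)$ and $\beta=(1+\delta)/(2+\delta)$, the paper shows $M_\ell(t):=Z(t)^{(\ell)}/(t+\beta)^{(\ell)}$ is a martingale for each $\ell$, applies martingale convergence, reads off $\mathbb E[(\lim p(t))^\ell]=\prod_{j=0}^{\ell-1}\frac{1+\gamma/(2+\delta)+j}{r+\beta+j}$ as Beta moments, and then mixes over the two values of $\gamma$ with the weight $\mathbb P(\gamma=0)=\frac{1+\delta}{(2+\delta)r-1}$---exactly your ``two phases'' split.

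One remark on the martingale verification: your plan to check $\mathbb E[M_{t+1}\mid\mathcal F_t]=M_t$ directly is in fact cleaner than what the paper does. Since $(Z+1)^{(\ell)}=\frac{Z+\ell}{Z}\,Z^{(\ell)}$ and the jump probability is $Z/(t+\beta)$, one line gives $\mathbb E[Z(t+1)^{(\ell)}\mid\circ]=\frac{t+\beta+\ell}{t+\beta}\,Z(t)^{(\ell)}$, whereas the paper expands in ordinary powers and proves the resulting Stirling-number identity $\ell\,s(\ell,i)=\sum_{k\ge i}s(\ell,k)\binom{k}{i-1}$ via generating functions. Your description of the split as ``whether $r$'s incident loop is used to reach the descendant set'' is slightly off in wording---it is simply whether $r$ loops (so $Y(t)=2X(t)$) or attaches to $[r-1]$ (so $Y(t)=2X(t)-1$)---but you clearly have the right dichotomy and the right weights.
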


\noindent {\bf Note.\/} {\bf (i)\/} The proof is based on a new family of martingales $M_{\ell}(t):=\frac{\left(X(t)+\frac{\ga}{2+\delta}\right)^{(\ell)}}{(t+\be)^{(\ell)}}$,
$(z)^{(\ell)}$ standing for the rising factorial. This family definitely resembles the martingales Mori \cite{Mor0,Mor} used for the individual vertices' degrees.  For instance, if $D_j(t)$ is the degree of vertex $j$ at time $t\ge j$, then for some deterministic $\ga_k(t)$, $Z_{j,k}(t):= \ga_k(t)(D_j(t)+\delta)^{(k)}$ is a martingale,
Hofstad \cite{Hof}. However, $M_{\ell}(t)$ depends on $X(t)$, a {\it global\/} parameter of the PAM graph. Unsurprisingly, the proof that each $M_{\ell}(t)$ is indeed a martingale requires a very different method.
 {\bf (ii)\/} Whp $G_{1,\delta}$ is a forest of $\Theta(\log t)$ trees rooted at vertices with
loops. For the preferential attachment tree (no loops), Janson \cite{Jan} recently proved that the scaled sizes of the {\it principal\/} subtrees, those rooted at the root's children and ordered chronologically, converge a.s. to the {\it GEM\/} distributed random variables. His techniques differ significantly.

For $m>1$, we use Theorem \ref{th1} to prove a somewhat surprising result that, for $r=O(1)$, almost surely all but a vanishingly small fraction of vertices are descendants of the vertex $r$, (cf.~\cite{Jan}).
\begin{theorem}\label{th2} 
Let $m>1$ and $\delta>-m$ and let $p_X(t)=X(t)/t$, $p_Y(t)=Y(t)/(2mt)$, where $Y(t)$ is the total degree of the descendants of $r$ at time $t$.
Then almost surely  $\lim_{t\to\infty} p_X(t)=\lim_{t\to\infty} p_Y(t)=1$. 
\end{theorem}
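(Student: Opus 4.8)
\emph{Proof plan.} The plan is, first, to use the Bollob\'as--Riordan collapsing coupling together with Theorem~\ref{th1} to show that the descendant set of $r$ already occupies a \emph{positive} fraction of the vertices, and then to run a fluid-limit (approximate-supermartingale) argument on the total weight of the \emph{non}-descendants to push this fraction up to $1$.

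\textbf{Stage 1 (positive density).} Since $\delta>-m$ we have $\delta/m>-1$, so Theorem~\ref{th1} applies to the process $\{G_{1,\delta/m}(s)\}$, from which $\{G_{m,\delta}(t)\}$ is obtained by collapsing the blocks $B_i=\{v_{(i-1)m+1},\dots,v_{im}\}$ to vertices $w_i$. I would set $\rho=(r-1)m+1$ and let $\mathcal D_\rho(s)$ be the descendant set of $v_\rho$ in $G_{1,\delta/m}(s)$. The key point is that if $v_j$ has a decreasing path to $v_\rho$, then projecting that path onto blocks and deleting the steps that stay inside one block yields a strictly decreasing chain of blocks from $B_{\lceil j/m\rceil}$ down to $B_r$ in which consecutive blocks are joined by a genuine edge of $G_{m,\delta}$; hence $w_{\lceil j/m\rceil}$ is a descendant of $w_r$. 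As each block has $m$ vertices, $X(t)\ge|\mathcal D_\rho(mt)|/m$, so Theorem~\ref{th1} gives $\liminf_t p_X(t)\ge\zeta$ a.s., where $\zeta>0$ a.s.\ is the beta-mixture limit of $|\mathcal D_\rho(s)|/s$; and since every vertex of $G_{m,\delta}$ has degree at least $m$, also $\liminf_t p_Y(t)>0$ a.s.

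\textbf{Stage 2 (bootstrapping to $1$).} Give each vertex the weight $(\text{its degree})+\delta$, let $\overline W(t)$ be the total weight of the non-descendants, and put $\beta_t:=\overline W(t)/\bigl((2m+\delta)t\bigr)\in[0,1]$, so that $1-\beta_t$ is, up to $o(1)$, the probability that an edge of a new vertex lands on a descendant. The heart of the argument is a drift estimate: when vertex $t+1$ arrives, each of its $m$ edges hits the non-descendant mass with probability $\beta_t$ up to $O(1/t)$ (the correction absorbing the sequential re-weighting after each edge, loops, and the lower-order terms in \eqref{defprob}), and $t+1$ fails to become a descendant exactly when none of its edges lands on an old descendant; tracking the resulting change in $\overline W$ then gives
\[
\mathbb E\bigl[\beta_{t+1}-\beta_t\mid\mathcal F_t\bigr]=-\,\frac{c\,(\beta_t-\beta_t^{\,m})}{t+1}+\eta_t,\qquad c:=\frac{m+\delta}{2m+\delta}>0,\quad\sum_t|\eta_t|<\infty .
\]
Since $m\ge 2$ and $\beta_t\in[0,1]$, we have $\beta_t-\beta_t^{\,m}\ge 0$, so $\beta_t$ is a bounded nonnegative supermartingale up to a summable correction; hence $\beta_t\to\beta_\infty$ a.s.\ and $\sum_t(\beta_t-\beta_t^{\,m})/(t+1)<\infty$ a.s. As $\sum 1/t=\infty$, this forces $\beta_\infty-\beta_\infty^{\,m}=0$, i.e.\ $\beta_\infty\in\{0,1\}$; the value $1$ is ruled out by Stage~1 (which gives $\limsup_t\beta_t<1$), so $\beta_\infty=0$. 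Consequently the total non-descendant weight is $o(t)$, and since every non-descendant has degree $\ge m$, both their number and their total degree are $o(t)$, giving $p_X(t)\to1$ and $p_Y(t)\to1$ a.s.

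\textbf{Main obstacle.} The one genuinely technical step is the drift identity in Stage~2: I must check that replacing the true sequential dynamics --- the probabilities $\tfrac{d_{t,i-1}(x)+\delta}{(2m+\delta)t+2i-1+i\delta/m}$, the degree $m+\#\{\text{loops of }t+1\}$ acquired by the newcomer, and the $\delta$-weighting that makes $\overline W$ (rather than a bare degree count) the right quantity --- by the idealized picture of $m$ independent $\mathrm{Bernoulli}(\beta_t)$ trials costs only a summable error $\eta_t$, and that this error perturbs neither the sign of the drift nor the location $\{0,1\}$ of its zeros. The remaining ingredients --- the collapsing coupling, supermartingale convergence, and the $\sum 1/t=\infty$ dichotomy --- are standard.
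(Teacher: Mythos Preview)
Your proposal is correct and follows essentially the same route as the paper. Your $\beta_t$ is precisely $1-p(t)$ in the paper's notation, where $p(t)=\frac{2m}{2m+\delta}p_Y(t)+\frac{\delta}{2m+\delta}p_X(t)$, and your drift identity is exactly the paper's equation for $\mathbb E[p(t+1)\mid\circ]$ rewritten in the complementary variable; the paper derives it via a careful computation of the factorial moments of the number of descendant-hits among the $m$ edges (its Lemma on $\mathbb P(Y(t+1)=Y(t)+m+a\mid\circ)$ and the factor $\Pi_m(t)=1-O(t^{-1})$ handling loops), which is precisely the ``main obstacle'' you flag. Your Stage~1 is the paper's coupling lemma $X_{m,\delta}(t,r)\ge m^{-1}X_{1,\delta/m}(mt,\rho)$ combined with Theorem~\ref{th1}; the paper takes $\rho=mr$ rather than your $\rho=(r-1)m+1$, but any vertex of block $B_r$ works for the same reason. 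The only organizational difference is that the paper first runs the drift/martingale argument to force $p(\infty)\in\{0,1\}$ and \emph{then} invokes the coupling to rule out $0$, whereas you reverse the order; the content is the same.
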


%\medskip
%\noindent
%{\color{blue} {\bf Uniform  Attachment:}\\
%Start with a single vertex $1$. Recursively, vertex $t+1$ attaches itself to $m$ vertices chosen uniformly at random (repetitions allowed) from $[t]$, the set of all older vertices. This is the limiting case $\delta=\infty$ of the PAM graph process.
%Using the proof of Theorem \ref{th1} we prove:
%}

For the case of UAM, we have the following result.

\begin{theorem}\label{th2+}
Consider the UAM graph process $G_{t,m}$. Given \blue{$r>1$}, let $X(t)$ be the cardinality of the descendant tree rooted at vertex \blue{$r$}, and let $p(t):=X(t)/t$. 
\begin{enumerate}
\item[\textup{(i)}] For $m=1$, almost surely, $\lim p(t)$ exists and
it has the same distribution as the minimum of $(r-1)$ independent $[0,1]$-Uniforms. Consequently a.s. $\liminf_{t\to\infty}p(t)>0$.
\item[\textup{(ii)}] For $m>1$, almost surely $\lim_{t\to\infty} p(t)=1$.
\end{enumerate}
\end{theorem}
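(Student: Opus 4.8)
\emph{Proof proposal.} The plan is to treat the two cases separately, using that in each model $X(t)$ is a pure‑birth chain whose one–step transition probability depends only on the current ratio $p(t)=X(t)/t$.

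\emph{Part (i), $m=1$.} Here the UAM process is the uniform random recursive tree, and vertex $t+1$ becomes a descendant of $r$ exactly when the single vertex it picks (uniform on $[t]$) already lies in the descendant set of $r$; thus, with $\mathcal F_t$ the history up to time $t$,
\[
\mathbb P\bigl(X(t+1)=X(t)+1\mid\mathcal F_t\bigr)=\frac{X(t)}{t},\qquad X(r)=1 .
\]
This is precisely a P\'olya urn started at time $r$ with one ``descendant'' ball and $r-1$ ``non‑descendant'' balls. First I would note that $p(t)$ is a bounded martingale, hence $p(t)\to B$ a.s.\ and in $L^1$; then, using $(x+1)^{(\ell)}=\tfrac{x+\ell}{x}x^{(\ell)}$, that $X(t)^{(\ell)}/t^{(\ell)}$ is a martingale for every $\ell\ge 1$. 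Letting $t\to\infty$ gives $\mathbb E[B^{\ell}]=X(r)^{(\ell)}/r^{(\ell)}=\ell!/\bigl(r(r+1)\cdots(r+\ell-1)\bigr)$, which are exactly the moments of $\mathrm{Beta}(1,r-1)$; since $B$ has bounded support, these determine its law. Finally $\mathrm{Beta}(1,r-1)$ has density $(r-1)(1-x)^{r-2}$ on $[0,1]$, i.e.\ it is the law of $\min(U_1,\dots,U_{r-1})$, and it is a.s.\ positive because $r\ge 2$, whence $\liminf_t p(t)>0$ a.s.

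\emph{Part (ii), $m>1$.} Work with $Z(t):=t-X(t)$, the number of non‑descendants; vertex $t+1$ is a non‑descendant iff all $m$ of its independent uniform choices miss the descendant set, so $\mathbb P\bigl(Z(t+1)=Z(t)+1\mid\mathcal F_t\bigr)=(Z(t)/t)^{m}$. A one‑line computation shows that $S_t:=Z(t)/t$ is a nonnegative supermartingale with downward drift
\[
A_t:=S_t-\mathbb E[S_{t+1}\mid\mathcal F_t]=\frac{Z(t)\bigl(1-(Z(t)/t)^{m-1}\bigr)}{t(t+1)}\ \ge\ 0 .
\]
Hence $S_t\to L\in[0,1]$ a.s., and telescoping expectations gives $\sum_t\mathbb E[A_t]<\infty$, so $\sum_t A_t<\infty$ a.s. On the event $\{0<L<1\}$ one eventually has $L/2\le S_t\le(1+L)/2$, so $A_t\ge c_L/(t+1)$ with $c_L>0$, forcing $\sum_t A_t=\infty$ — a contradiction; thus $L\in\{0,1\}$ a.s. To eliminate $L=1$, I would run the $m$‑model and the $m=1$ model on the same randomness, letting each new vertex's first choice be common to both: an easy induction on $t$ shows the descendant sets are nested, $D_1(t)\subseteq D_m(t)$, hence $X_m(t)\ge X_1(t)$ for all $t$. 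Since $X_m(t)/t=1-S_t\to 1-L$ while $X_1(t)/t\to B_1\sim\mathrm{Beta}(1,r-1)$ with $B_1>0$ a.s.\ by Part (i), we get $1-L\ge B_1>0$, i.e.\ $L<1$ a.s. Combining the two, $L=0$ a.s., which is $p(t)=1-S_t\to 1$ a.s.

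\emph{Main obstacle.} Everything subtle lives in Part (ii): the supermartingale argument only confines $\lim_t Z(t)/t$ to $\{0,1\}$, so a genuinely different, structural ingredient — here the monotone coupling with the $m=1$ tree, fed by Part (i) — is needed to discard the degenerate alternative $L=1$. The only other place requiring care is bookkeeping the constants in the ``no intermediate limit'' step so that $A_t$ is bounded below by a constant multiple of $1/t$ on $\{0<L<1\}$.
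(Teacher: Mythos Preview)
Your proposal is correct and follows essentially the same approach as the paper: for (i) both use the rising--factorial martingales $X(t)^{(\ell)}/t^{(\ell)}$ to identify the limit as $\mathrm{Beta}(1,r-1)$, and for (ii) both combine a drift argument forcing $\lim p(t)\in\{0,1\}$ with the monotone coupling to the $m=1$ process to exclude the degenerate alternative. Your presentation is in fact slightly more streamlined --- the direct P\'olya--urn verification of the martingale property avoids the generating--function lemma the paper invokes, and working with the supermartingale $S_t=1-p(t)$ gives convergence for free --- but the ideas are the same.
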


\subsection{Greedy Matching Algorithm}
We analyze a greedy matching algorithm;  a.s. it delivers a surprisingly large matching set even for relatively small $m$.  This algorithm generates the increasing sequence $\{ M(t)\}$ of partial matchings on the sets $[t]$, with $ M(1)=\emptyset$. Suppose that $X(t)$ is the set of unmatched vertices in $[t]$ at time $t$. If $t+1$ attaches itself to a vertex $u\in X(t)$, then $M(t+1)=M(t)\cup\{\{u,t+1\}\}$, otherwise $M(t+1)=M(t)$. (If $t+1$ chooses multiple vertices from $X(t)$, then we pick one of those as $u$ arbitrarily.) 

Consider first the PAM graph.
Let 
\[
h(z)=h_{m,\de}(z):= 2\left[1-\bfrac{m+\de}{2m+\de}z\right]^m-z-1,
\]
and let $\rho=\rho_{m,\de}$ be  the unique root $\rho=\rho_{m,\de}$ in the interval $[0,1]$ of $h(z)=0$: $\rho_{m,\delta}\in (0,1)$ if $\delta>-m$.  
%Denoting $x(t)=X(t)/t$, we have
\begin{theorem}\label{th3} 
Let $M(t)$ and $X(t)$ be the set of greedy matching and the set of uncovered vertices at time $t$, and let $x(t)=X(t)/t$.
For any $\delta>-m$ and $\a<1/3$, almost surely,
\[
\lim_{t\to\infty} t^{\a}\max\{0, x(t)-\rho_{m,\delta}\}=0.
\] 
In consequence, the Greedy Matching Algorithm a.s. finds a sequence of nested matchings $\{M(t)\}$, with $M(t)$ of size $(1-o(1))(1-\rho_{m,\delta})t/2$, at least. 
\end{theorem}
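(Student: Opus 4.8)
Write $x(t)=X(t)/t$ and let $\mathcal F_t$ be the history of the process up to time $t$. The scheme is a one-step drift estimate feeding a quadratic-Lyapunov / supermartingale argument. The only combinatorial input needed is that an uncovered vertex is light: a vertex that has never been matched still carries the $m$ half-edges it was born with (vertex $1$ carries $2m$), so its degree is at least $m$; hence the $\delta$-weight of the uncovered set,
\[
W(t):=\sum_{u\in X(t)}\bigl(d_t(u)+\delta\bigr),\qquad\text{obeys}\qquad W(t)\ \ge\ (m+\delta)\,X(t),
\]
with $m+\delta>0$. Vertex $t+1$ stays uncovered iff each of its $m$ edges misses $X(t)$; since conditioning on the earlier edges missing $X(t)$ leaves the degrees of the vertices of $X(t)$ unchanged and the $i$-th denominator $N_i=(2m+\delta)t+2i-1+i\delta/m=(2m+\delta)t\,(1+O(1/t))$ is deterministic, one has the \emph{exact} identity $\mathbb P(t+1\text{ uncovered}\mid\mathcal F_t)=\prod_{i=1}^m\bigl(1-W(t)/N_i\bigr)$. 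Combining with the weight bound, and using $X(t+1)-X(t)=+1$ if $t+1$ is uncovered and $-1$ otherwise, $x(t+1)-x(t)=\bigl((X(t+1)-X(t))-x(t)\bigr)/(t+1)$, and $2\bigl(1-\tfrac{m+\delta}{2m+\delta}z\bigr)^m-1-z=h_{m,\delta}(z)$, gives
\[
\mathbb E\bigl[x(t+1)-x(t)\mid\mathcal F_t\bigr]\ \le\ \frac{h_{m,\delta}(x(t))}{t+1}+O(1/t^2).
\]

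Now the Lyapunov step. The function $h=h_{m,\delta}$ is convex and strictly decreasing on $[0,1]$ with unique zero $\rho=\rho_{m,\delta}\in(0,1)$, so its smallest secant slope out of $\rho$ is attained at $1$, and $\kappa:=-h(1)/(1-\rho)\ge 1$: indeed $-h(1)\ge 1-\rho$ is equivalent to $2\bigl(1-\tfrac{m+\delta}{2m+\delta}\bigr)^m\le 2\bigl(1-\tfrac{m+\delta}{2m+\delta}\rho\bigr)^m=1+\rho$, which holds because $\rho\le 1$. Thus $h(x)\le-\kappa(x-\rho)_+$ on $[0,1]$. Set $\psi(t):=\bigl((x(t)-\rho)_+\bigr)^2$. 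Since $|x(t+1)-x(t)|\le 2/(t+1)$ the conditional variance of $x(t+1)$ is $O(1/t^2)$, and the drift bound then yields, for all large $t$,
\[
\mathbb E\bigl[\psi(t+1)\mid\mathcal F_t\bigr]\ \le\ \psi(t)\Bigl(1-\frac{2\kappa}{t+1}\Bigr)+\frac{C}{t^2}.
\]
Dropping the contraction term, $\psi(t)-\sum_{s<t}Cs^{-2}$ is a bounded-below supermartingale, so $\psi(t)$ converges a.s.; iterating the recursion with $2\kappa\ge 2>1$ gives $\mathbb E[\psi(t)]=O(1/t)$, whence $\psi(t)\to 0$ a.s. by Fatou (this already recovers $\limsup_t x(t)\le\rho$ a.s.).

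For the rate, fix $\alpha<1/2$ (in particular this covers every $\alpha<1/3$) and set $\chi(t):=t^{2\alpha}\psi(t)$. From the recursion, $(t+1)^{2\alpha}=t^{2\alpha}(1+O(1/t))$, $\psi(t)=o(1)$ and $\kappa>\alpha$, one obtains $\mathbb E[\chi(t+1)\mid\mathcal F_t]\le\chi(t)\bigl(1-\tfrac{2(\kappa-\alpha)}{t}\bigr)+O(t^{2\alpha-2})\le\chi(t)+O(t^{2\alpha-2})$ for large $t$, with $\sum_t t^{2\alpha-2}<\infty$; so $\chi(t)-\sum_{s<t}O(s^{2\alpha-2})$ is a bounded-below supermartingale and $\chi(t)$ converges a.s., while $\mathbb E[\chi(t)]=t^{2\alpha}\mathbb E[\psi(t)]=O(t^{2\alpha-1})\to 0$, so Fatou forces $\chi(t)\to 0$ a.s. This is exactly $t^{\alpha}\max\{0,x(t)-\rho_{m,\delta}\}\to 0$ a.s. Finally, every vertex is either uncovered or an endpoint of exactly one edge of $M(t)$, so $|M(t)|=(t-X(t))/2=(1-x(t))t/2\ge(1-\rho_{m,\delta}-o(1))t/2$, which is the asserted matching lower bound.

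\textbf{Main obstacle.} There is no real difficulty at the combinatorial level: the decisive observation is that the trivial bound $W(t)\ge(m+\delta)X(t)$ points in exactly the direction needed for an \emph{upper} bound on $X(t)$ (uncovered vertices can only be light), so one never needs to control $W(t)$ from above and no concentration statement about the matching itself is required. The work is in the estimates: (i) verifying the exact product formula for $\mathbb P(t+1\text{ uncovered}\mid\mathcal F_t)$ and bounding the $O(1/t)$ corrections from the $N_i$'s and, when $m=1$, from loops; (ii) the convexity computation giving $\kappa\ge 1$, which is what makes the supermartingale step run in a single pass rather than via a two-phase scheme near $\rho$; and (iii) converting convergence in mean into almost-sure convergence with a rate through the reweighted supermartingale $\chi(t)=t^{2\alpha}\psi(t)$, checking that every error term is genuinely summable for the stated range of $\alpha$. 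I expect (iii) — together with making the drift remainder $O(1/t^2)$ uniform — to require the most care.
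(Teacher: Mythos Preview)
Your drift inequality is the same as the paper's (equation \eqref{meanXupper}): both rest on the lower bound $W(t)\ge(m+\delta)X(t)$ for the $\delta$-weight of the uncovered set, which is exactly the paper's observation $Y(t)\ge mX(t)$. Your product formula for $\mathbb P(t+1\text{ uncovered}\mid\mathcal F_t)$ is correct as stated, loops included, so the remark about an extra loop correction ``when $m=1$'' is unnecessary.

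Where you diverge is in how the drift is converted into an almost-sure rate. The paper packages this into a general technical lemma (Lemma~\ref{general} and its extension Lemma~\ref{extensions}(a)) proved via a Chernoff bound plus Borel--Cantelli: one shows $\mathbb P(x(t)>\rho+t^{-1/3}\log t)\le\exp(-\Theta(\log^3 t))$ by dominating the number of upward jumps after the last time $x(\tau)\le\rho+\varepsilon/2$ by a sum of independent Bernoullis, and sums over $t$. This yields the stated exponent range $\alpha<1/3$. Your route is the quadratic-Lyapunov/supermartingale argument, closer in spirit to what the paper does later for the \emph{independent set} result (Theorem~\ref{thm5}): a recursion for $\mathbb E[\psi(t+1)\mid\mathcal F_t]$, an $O(1/t)$ bound on $\mathbb E[\psi(t)]$, and then supermartingale convergence of $t^{2\alpha}\psi(t)$ combined with Fatou to force the a.s.\ limit to be zero. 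Your convexity computation giving $\kappa=-h(1)/(1-\rho)\ge 1$ is correct and is the analogue of the paper's hypothesis $h'(x)<-1$ in Lemma~\ref{general}. Two small remarks: you do not actually need $\psi(t)=o(1)$ in the $\chi(t)$ step, since the deterministic bound $\psi(t)\le 1$ already makes $\chi(t)\cdot O(1/t^2)=O(t^{2\alpha-2})$; and your argument in fact delivers every $\alpha<1/2$, which is stronger than the paper's $\alpha<1/3$.

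What each approach buys: the paper's Chernoff route gives an explicit tail bound $\exp(-\Theta(\log^3 t))$ at each fixed time, which is sometimes useful in its own right, and isolates the analysis into a reusable lemma applied to both models. Your Lyapunov route is shorter, avoids the last-exit-time decomposition, and yields a sharper exponent range; it also makes transparent that only the one-sided drift (not any concentration of the matching itself) is needed.
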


\begin{Remark}
Observe that $\rho_{m,-m}=1$, which makes it plausible that the maximum matching size is minuscule compared to $t$. 
In fact, by Remark~\ref{rem:delta=-m}, $G_{m,-m}(t)$ is the star centered at vertex $1$ and hence the maximum matching size is 1.
\end{Remark}

\begin{Remark} Consider the case $\de=0$. Let $r_m:=1-\rho_{m,0}$; some values of $r_m$ are:
\begin{equation}\label{rvalues}
 \begin{alignedat}{3}
&r_1=0.5000, \qquad &&r_2=0.6458, \qquad &&r_5=0.8044,\\
&r_{10}=0.8863,&&r_{20}=0.9377, &&r_{70}=0.9803.
\end{alignedat}
\end{equation}
With a bit of calculus, we obtain that $r_m=1- 2m^{-1}\log2 +O(m^{-2})$.
\end{Remark}

\begin{theorem}\label{thm4} Let $M(t)$ denote the greedy matching set after $t$ steps of the UAM process. 
Let $r_m$ denote a unique positive root of $2(1-z^m) -z=0$: $r_m=1-m^{-1}\log 2+O(m^{-2})$. 
Then, for any $\a<1/3$, almost surely
\[
\lim_{t\to\infty} t^{\a}\biggl| \frac{2|M(t)|}{t}-r_m\biggr|=0.
\]
\end{theorem}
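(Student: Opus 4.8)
The plan is to reduce Theorem~\ref{thm4} to the same stochastic-approximation analysis that underlies Theorem~\ref{th3}, which in the UAM becomes appreciably cleaner since the $m$ attachments of each new vertex are independent uniform samples from $[t]$ and there are no loops. Write $X(t)=t-2|M(t)|$ for the number of uncovered vertices and $y(t):=2|M(t)|/t=1-X(t)/t\in[0,1]$, and let $\{\mathcal F_t\}$ be the natural filtration. Conditioned on $\mathcal F_t$, vertex $t+1$ stays unmatched precisely when all of its $m$ samples land in the covered set, an event of probability $(1-X(t)/t)^m=y(t)^m$; in that case $X$ increases by $1$ and otherwise decreases by $1$. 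Hence $\E[X(t+1)\mid\mathcal F_t]=X(t)+2y(t)^m-1$, and a one-line computation gives the \emph{exact} recursion
\[
\E[y(t+1)\mid\mathcal F_t]=y(t)+\frac{\phi(y(t))}{t+1},\qquad \phi(y):=2(1-y^m)-y.
\]
So $\{y(t)\}$ is a Robbins--Monro type process whose drift $\phi$ has $r_m$ as its unique zero in $(0,1)$ and satisfies $-(2m+1)\le\phi'(y)=-2my^{m-1}-1\le-1$ for every $y\in[0,1]$; thus $\phi$ is Lipschitz and \emph{uniformly} contractive toward $r_m$ on the whole range of the process, which is what makes the two-sided estimate possible. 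Finally, since $X$ changes by $\pm1$, the martingale differences $\xi_{t+1}:=y(t+1)-\E[y(t+1)\mid\mathcal F_t]$ obey $|\xi_{t+1}|\le 1/t$.

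Given this, the a.s.\ convergence $y(t)\to r_m$ is routine. Writing $D(t):=y(t)-r_m$, so that $D(t+1)=D(t)+\phi(y(t))/(t+1)+\xi_{t+1}$, and using $\phi(y)(y-r_m)\le-(y-r_m)^2$ and $|\phi(y)|\le(2m+1)|y-r_m|$ (both from the bounds on $\phi'$ via the mean value theorem) together with $\E[\xi_{t+1}^2\mid\mathcal F_t]\le(t+1)^{-2}$, one obtains, for all $t$ beyond a constant $t_0(m)$,
\[
\E\big[D(t+1)^2\mid\mathcal F_t\big]\le D(t)^2\Big(1-\tfrac1{t+1}\Big)+\frac{C}{(t+1)^2}.
\]
Thus $D(t)^2$ is a nonnegative almost supermartingale with summable error; by the Robbins--Siegmund lemma it converges a.s.\ and $\sum_t D(t)^2/(t+1)<\infty$ a.s., and the latter forces the limit to be $0$.

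The substance of the theorem is the rate, and here I would follow Theorem~\ref{th3}. The idea is to iterate the even-moment analogue of the displayed inequality: expanding $D(t+1)^{2p}$ and using $D(t)^{2p-1}\phi(y(t))\le-D(t)^{2p}$, the drift produces a contraction factor $1-\Theta(p/(t+1))$, while the remaining terms --- after taking expectations and using Lyapunov interpolation among the lower even moments to handle the odd-order cross terms --- contribute $O(t^{-(p+1)}\log t)$; this yields inductively $\E[D(t)^{2p}]=O(t^{-p}\log t)$ for every fixed $p\ge1$. Markov's inequality then gives $\pr{|D(t)|>t^{-\a}}=O(t^{-p(1-2\a)}\log t)$, which is summable over $t$ as soon as $p>1/(1-2\a)$; Borel--Cantelli gives $t^{\a}|D(t)|\to0$ a.s., and since $D(t)=2|M(t)|/t-r_m$ this is the claim --- in fact for every $\a<1/2$, the more conservative $\a<1/3$ being stated only to parallel Theorem~\ref{th3}. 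The main obstacle is precisely verifying that the moment recursion closes at \emph{every} $p$ with the error exponent staying one ahead of the power being bounded (so that the a.s.\ rate is not capped), which is where the interpolation among the intermediate moments is needed; note also that a sparse-subsequence argument would not help, because before the rate is known $D$ can drift by $\Theta(1)$ across a doubling window, so Borel--Cantelli must be applied over all $t$ and the high-moment bounds are essential. The rest is bookkeeping identical to the PAM case, with $\big(1-\tfrac{m+\delta}{2m+\delta}z\big)^m$ replaced throughout by $z^m$.
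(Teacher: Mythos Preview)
Your setup matches the paper's: you track $y(t)=2|M(t)|/t$ and derive the exact recursion $\E[y(t+1)\mid\mathcal F_t]=y(t)+\phi(y(t))/(t+1)$ with $\phi(y)=2(1-y^m)-y$, which is the same computation the paper performs (in the complementary variable $x(t)=1-y(t)$, with $h(z)=2(1-z)^m-z-1$). From there the two arguments diverge. The paper does not argue via moments for this theorem at all: it simply invokes the general Lemma~\ref{general} and its two-sided extension (Lemma~\ref{extensions}(b)), whose proof is a last-exit-time decomposition combined with stochastic domination by independent Bernoullis and a Chernoff bound, yielding $\a<1/3$ directly. Your route --- bounding $\E[D(t)^{2p}]$ inductively in $p$, then Markov plus Borel--Cantelli along the full sequence --- is a genuine alternative and, carried through, does give the stronger conclusion $\a<1/2$ that you note. (The second-moment piece of your argument, paired with a sparse-subsequence Borel--Cantelli step rather than higher moments, is exactly what the paper uses later for Theorem~\ref{thm5}, where the one-sided Chernoff lemma is unavailable; that version also yields only $\a<1/3$.)

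Two small corrections. First, your dismissal of the subsequence argument is mistaken: since $|y(t+1)-y(t)|\le 2/(t+1)$, over $[t_\nu,t_{\nu+1}]$ with $t_\nu=\lfloor\nu^\beta\rfloor$ the total drift is $O((t_{\nu+1}-t_\nu)/t_\nu)=O(\nu^{-1})=O(t^{-1/\beta})$, not $\Theta(1)$; this is precisely how the paper interpolates in the proof of Theorem~\ref{thm5}. (Your remark is correct for \emph{doubling} windows, but those are not what one uses.) Second, in the higher-moment recursion the odd cross terms need no Lyapunov interpolation: since $|D(t)|\le 1$ one has $\E[|D(t)|^{2j+1}]\le\E[D(t)^{2j}]$, and together with $|B|:=|\phi(y(t))/(t+1)+\xi_{t+1}|=O(1/t)$ this already shows every $k\ge 2$ term contributes $O(t^{-(p+1)})$. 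The contraction factor for the $2p$-th moment is actually $1-2p/(t+1)$ (from $(y-r_m)\phi(y)\le-(y-r_m)^2$ and $D^{2p-1}\phi\le-D^{2p}$), strictly exceeding the source exponent $p$, so $\E[D(t)^{2p}]=O(t^{-p})$ without the $\log t$ and the induction closes cleanly.
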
 

\noindent Some values of $r_m$ in this case are:
\begin{alignat*}{3}
&r_1=0.6667, \qquad &&r_2=0.7808, \qquad &&r_5=0.8891,\\
&r_{10}=0.9386, &&r_{20}=0.9674, &&r_{35}=0.9809.
\end{alignat*}

\subsection{Greedy Independent Set Algorithm} The algorithm generates an increasing sequence of independent sets 
$\{I(t)\}$ on vertex sets $[t]$. Namely, $I(1)=\{1\}$, and $I(t+1)=I(t)\cup\{t+1\}$ if $t+1$ does not select any of the vertices in $I(t)$; if it does, then $I(t+1)=I(t)$. $I(t)$ is also a dominating set for the PAM/UAM graph with vertex set $[t]$;
indeed if a vertex $\tau\in [t]\setminus I(t)$ did not have any neighbor in $I(t)$, then vertex $\tau$ would have been added to $I(\tau-1)$ at step $\tau$. (Pittel \cite{Pit-1} analyzed performance of this algorithm applied to Erd\H os--R\'enyi random
graph with a large, but fixed vertex set.)

For the PAM case we prove
\begin{theorem}\label{thm5} Let $w_m$ denote the unique root of $-w+(1-w)^m$ in $(0,1)$. For any 
$\chi\in \Bigl(0,\min\Bigl\{\frac{1}{3},\frac{2m+2\delta}{3(2m+\delta)}\Bigr\}\Bigr)$, almost surely
\begin{equation}\label{PAM0}
\lim_{t\to\infty}t^{\chi}\biggl|\frac{|I(t)|}{t}-w_m\biggr|=0.
\end{equation}
\end{theorem}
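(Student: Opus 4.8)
I would phrase this as a one–dimensional stochastic approximation, using the same machinery as in the proofs of Theorems~\ref{th3} and~\ref{thm4}. Put $N(t):=|I(t)|$, let $S(t):=\sum_{x\in I(t)}\bigl(d_t(x)+\delta\bigr)$ be the total attachment weight carried by the current independent set, and $s(t):=S(t)/\bigl((2m+\delta)t\bigr)\in(0,1]$ its share of the total weight $\sum_{x\in[t]}(d_t(x)+\delta)=(2m+\delta)t$. Vertex $t+1$ enters $I(t)$ iff none of its $m$ edges lands inside $I(t)$ (a loop on $t+1$ does no harm), and conditionally on $\mathcal F_t$ each such edge lands in $I(t)$ with probability $s(t)+O(1/t)$, the error absorbing the $2i-1+i\delta/m$ corrections in \eqref{defprob}, the (probability $O(1/t)$) self–loops, and the $O(1)$ weight perturbations created by the earlier edges of $t+1$; hence $\pr{N(t+1)=N(t)+1\mid\mathcal F_t}=(1-s(t))^m+O(1/t)$. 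On the step $t\mapsto t+1$, $S(t)$ grows by the number of edges of $t+1$ landing in $I(t)$ (conditional mean $ms(t)+O(1/t)$) plus, when $t+1$ joins $I$, a further $d_{t+1}(t+1)+\delta$ whose only excess over $m+\delta$ is the number of loops on $t+1$, of mean $O(1/t)$. So $\E[S(t+1)-S(t)\mid\mathcal F_t]=ms(t)+(m+\delta)(1-s(t))^m+O(1/t)$ with deterministically bounded increments, and with $a:=\frac{m+\delta}{2m+\delta}$ and $g(z):=(1-z)^m-z$ this becomes
\[
\E\bigl[s(t+1)-s(t)\mid\mathcal F_t\bigr]=\frac{a}{t+1}\,g\bigl(s(t)\bigr)+O(1/t^{2}),\qquad |s(t+1)-s(t)|=O(1/t);
\]
here $g$ is strictly decreasing on $[0,1]$, positive on $[0,w_m)$ and negative on $(w_m,1]$, so $w_m$ is its unique, attracting zero.

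\textbf{Step 1: the almost–sure rate for $s(t)$.} This recursion is of the type handled by the stochastic–approximation estimate used for Theorems~\ref{th3} and~\ref{thm4}, with drift function $\phi(z)=a\,g(z)$; one has $\phi'(w_m)=-a\bigl(m(1-w_m)^{m-1}+1\bigr)$, and the identity $(1-w_m)^m=w_m$ turns $m(1-w_m)^{m-1}+1$ into $\frac{(m-1)w_m+1}{1-w_m}\ge2$, the inequality being equivalent to $w_m\ge\frac1{m+1}$, which follows from $g\bigl(\tfrac1{m+1}\bigr)=\bigl(\tfrac{m}{m+1}\bigr)^m-\tfrac1{m+1}\ge0$ (i.e.\ $m^m\ge(m+1)^{m-1}$). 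Thus $|\phi'(w_m)|\ge 2a=\tfrac{2m+2\delta}{2m+\delta}$, and the estimate yields: for every $\chi<\min\{\tfrac13,\,|\phi'(w_m)|/3\}$ — in particular for every $\chi<\min\{\tfrac13,\,\tfrac{2m+2\delta}{3(2m+\delta)}\}$ — almost surely $|s(t)-w_m|=O(t^{-\chi})$. The mechanism is the usual one: the sign of $g$ together with $\sum_t O(t^{-2})<\infty$ forces $s(t)\to w_m$ a.s.; once $s(t)$ is near $w_m$ one has $g(z)(z-w_m)\le-a^{-1}|\phi'(w_m)|(z-w_m)^2$, so $(t+\beta)^{2\chi}(s(t)-w_m)^2$ is, up to a summable correction, a bounded–below supermartingale and hence bounded; the ceiling $\tfrac13$ is the familiar loss incurred when interlacing this contraction with an Azuma bound on the martingale part of the recursion.

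\textbf{Step 2: from $s(t)$ to $N(t)$, which is routine.} Write $N(t)=1+\sum_{u=1}^{t-1}\xi_u$ with $\xi_u=\mathbf 1\{u+1\in I(u+1)\}$ and $\E[\xi_u\mid\mathcal F_u]=(1-s(u))^m+O(1/u)$; the martingale $\sum_{u<t}(\xi_u-\E[\xi_u\mid\mathcal F_u])$ has $\pm1$ increments, so a.s.\ it is $O(\sqrt{t\log t})$. By Step~1 and the mean value theorem, $(1-s(u))^m=(1-w_m)^m+O(u^{-\chi})=w_m+O(u^{-\chi})$ (using $(1-w_m)^m=w_m$ again), hence $\sum_{u<t}\E[\xi_u\mid\mathcal F_u]=w_m t+O(t^{1-\chi})$ and $N(t)=w_m t+O(t^{1-\chi})$. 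Choosing $\chi$ strictly between the target exponent $\chi_0$ and $\min\{\tfrac13,\tfrac{2m+2\delta}{3(2m+\delta)}\}$ upgrades this to $t^{\chi_0}|N(t)/t-w_m|\to0$, which is \eqref{PAM0}.

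\textbf{Main obstacle.} Everything but Step~1 is bookkeeping; the content is the sharp almost–sure rate for the stochastic approximation $s(t)$. The two delicate ingredients are the self–referential supermartingale/Azuma bootstrap that caps the exponent at one third of the linear contraction rate, and the elementary but not-quite-obvious fact $w_m\ge\frac1{m+1}$ (equivalently $g'(w_m)\le-2$), which is precisely what makes the stated constant $\tfrac{2m+2\delta}{3(2m+\delta)}$ legitimate.
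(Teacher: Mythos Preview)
Your variable $s(t)$ is exactly the paper's $w(t)=\tfrac{m+\delta}{2m+\delta}\,i(t)+\tfrac{m}{2m+\delta}\,z(t)$ (since the total degree of $I(t)$ equals $m|I(t)|+Z(t)$), and your recursion for $s(t)$ is the paper's \eqref{PAM8}. So the crucial reduction to a single autonomous variable is the same.

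Where you diverge is in Step~1. The ``machinery of Theorems~\ref{th3} and~\ref{thm4}'' is Lemma~\ref{general}, whose hypotheses fail here: that lemma needs unit increments for $X(t)$ and $h'<-1$, whereas $S(t)$ jumps by up to $2m+\delta$ and $\phi'=a g'$ can satisfy $|\phi'|<1$ (for instance $a=\tfrac12$ at $\delta=0$). The paper accordingly does \emph{not} appeal to Lemma~\ref{general}; it runs the second-moment recursion
\[
\Bbb E\bigl[(w(t+1)-w_m)^2\bigr]\le\Bigl(1-\tfrac{2a}{t+1}\Bigr)\Bbb E\bigl[(w(t)-w_m)^2\bigr]+O(t^{-2}),
\]
using only the \emph{global} bound $f'\le -a$, to get $\Bbb E[(w(t)-w_m)^2]\le At^{-\alpha}$ for every $\alpha<\min\{1,2a\}$, and then Markov plus Borel--Cantelli along $t_\nu=\lfloor\nu^\beta\rfloor$ yields the a.s.\ rate $\chi<\alpha/3$; the $1/3$ comes from the optimization $\max_\chi\min(\chi,\alpha-2\chi)=\alpha/3$, not from Azuma. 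Your supermartingale sketch is this same second-moment contraction in disguise, but note that your detour through $w_m\ge\tfrac{1}{m+1}$ (hence $|\phi'(w_m)|\ge 2a$) is unnecessary: the trivial global bound $|g'|\ge 1$ already gives squared-error contraction rate $2a$, which is precisely the constant $\tfrac{2m+2\delta}{2m+\delta}$ in the statement.

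Your Step~2 is genuinely different and cleaner. The paper transfers the rate from $w(t)$ to $i(t)$ via a second second-moment recursion for $(i(t)-w_m)^2$, bounding the cross term by Cauchy--Schwarz against $\Bbb E[(w(t)-w_m)^2]$, and then repeats the subsequence Borel--Cantelli argument. Your decomposition $N(t)=1+\sum_u\xi_u$ with the martingale part $O(\sqrt{t\log t})$ and the compensator $w_mt+O(t^{1-\chi})$ (using the a.s.\ bound on $s(u)-w_m$) is more direct and avoids the second induction entirely.
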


\begin{remark} Thus the limiting scaled size of the greedy independent set does not depend on $\delta$, but
the convergence rate does.
\end{remark}

For the UAM case we prove an almost identical
\begin{theorem}\label{thm6} Let $w_m$ be the unique positive root of $-w+(1-w)^m$ in $(0,1)$. Then, for any $\a<1/3$, almost surely
\[
\lim_{t\to\infty}t^{\a}\Big|\frac{|I(t)|}{t} - w_m\Big|=0.
\]
\end{theorem}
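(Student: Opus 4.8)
The plan is to set up a single real-valued stochastic process, $I(t) = |I(t)|$, and track it via its one-step conditional increments, then apply a martingale-concentration / stochastic-approximation argument to pin down both the a.s.\ limit $w_m$ and the convergence rate $t^{-\a}$ for every $\a<1/3$. First I would compute $\pr{t+1\in I(t+1)\mid \Cal F_t}$: since in the UAM process vertex $t+1$ chooses $m$ vertices uniformly and independently from $[t]$, and it is added to the independent set exactly when none of those $m$ choices lands in $I(t)$, this conditional probability is exactly $\left(1-\frac{I(t)}{t}\right)^m$. Hence, writing $i(t):=I(t)/t$,
\[
\mean{I(t+1)\mid \Cal F_t} = I(t) + \left(1-i(t)\right)^m,
\]
so the normalized process $i(t)$ obeys a recursion of Robbins--Monro type: $\mean{i(t+1)\mid\Cal F_t} = i(t) + \tfrac1{t+1}\bigl(f(i(t)) - i(t)\bigr)$ with $f(w)=(1-w)^m$, whose unique stable fixed point in $(0,1)$ is the root $w_m$ of $-w+(1-w)^m=0$. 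The drift $g(w):=f(w)-w = (1-w)^m - w$ is strictly decreasing on $[0,1]$, positive for $w<w_m$ and negative for $w>w_m$, which is the mechanism forcing $i(t)\to w_m$.

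Next I would make this rigorous in two stages. Stage one: a.s.\ convergence $i(t)\to w_m$. Introduce the deviation $D(t):=I(t)-w_m t$ and write $D(t+1)=D(t) + \bigl(g(i(t))/... \bigr)$ — more precisely $D(t+1)-D(t) = \mathbf 1\{t+1\in I(t+1)\} - w_m$, whose conditional mean is $g(i(t))+w_m\bigl(1 - 1\bigr) = (1-i(t))^m - w_m$. A convenient route is to bound $|i(t)-w_m|$ by comparing with the deterministic solution and controlling the martingale part $\sum_{s\le t}\bigl(\mathbf 1\{s+1\in I(s+1)\} - \mean{\mathbf 1\{s+1\in I(s+1)\}\mid\Cal F_s}\bigr)$, which is a sum of bounded martingale differences, hence by Azuma--Hoeffding is $O(\sqrt{t\log t})$ a.s.; the drift $g$ being Lipschitz and having the right sign then yields $i(t)\to w_m$ a.s.\ (this is precisely the standard supermartingale/ODE-method argument, and one can essentially transcribe the proof of Theorem \ref{thm5} for the UAM case since the increments here are even simpler — no $\delta$-dependent denominators appear).

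Stage two: the rate. Fix $\a<1/3$ and aim to show $t^\a|i(t)-w_m|\to 0$ a.s. The key estimate is that $D(t)=I(t)-w_m t$ satisfies $\mean{D(t+1)\mid\Cal F_t} = D(t)\bigl(1 - \tfrac{c}{t}(1+o(1))\bigr) + O(D(t)^2/t)$, where $c:=-g'(w_m) = m(1-w_m)^{m-1}+1 > 0$; together with $|D(t+1)-D(t)|\le 1$ this is exactly the setting in which one proves $D(t)=O(t^{1/2+\eps})$ a.s.\ for every $\eps>0$ — e.g.\ by examining $\mean{D(t)^{2k}}$ via the recursion and an induction on $k$, or by a stopping-time/Doob argument on $D(t)^2 t^{2\a-1}$. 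Since $2\a-1<-1/3<0$, the martingale $\sum$ contributions are summable after the $t^{2\a-1}$ weighting, giving $t^{2\a-1}D(t)^2\to 0$, i.e.\ $t^\a|i(t)-w_m|\to 0$ a.s., which is the claim. The main obstacle is purely technical bookkeeping: handling the quadratic error term $O(D(t)^2/t)$ in the drift so that it does not swamp the linear contraction — this is dealt with by first using Stage one to know $D(t)=o(t)$, then bootstrapping the moment bounds; the exponent barrier $\a<1/3$ arises exactly from balancing the variance accumulation against the $1/t$ contraction rate and is the same threshold as in Theorems \ref{th3}, \ref{thm4}, \ref{thm5}.
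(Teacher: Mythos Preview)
Your setup is exactly the paper's: you correctly compute $\Bbb E[i(t+1)\mid\circ]=i(t)+\tfrac{1}{t+1}\bigl[-i(t)+(1-i(t))^m\bigr]$ with drift $g(w)=(1-w)^m-w$ having unique root $w_m$. From here, however, the paper's proof is a single line: it simply observes that $h(z):=-z+(1-z)^m$ satisfies the hypotheses of Lemma~\ref{general} (namely $h(0)>0$, $h(1)<0$, and $h'(z)=-1-m(1-z)^{m-1}<-1$ on $(0,1)$), and that the recursion is an \emph{equality}, so Lemma~\ref{extensions}(b) immediately gives $t^{\a}|i(t)-w_m|\to 0$ a.s.\ for every $\a<1/3$.

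Your route instead reproves this from scratch via a stochastic-approximation / second-moment argument, essentially transcribing the machinery from the proof of Theorem~\ref{thm5}. That works, but it is more labor than needed here, and your Stage~two sketch has some loose ends: the linearization of the drift for $D(t)=I(t)-w_mt$ gives contraction coefficient $m(1-w_m)^{m-1}/t$, not $c/t$ with $c=-g'(w_m)$ (the extra ``$+1$'' appears only when you track $i(t)-w_m$ rather than $D(t)$); and the claim that ``$D(t)^2t^{2\a-1}$'' is the right quantity to examine is not quite justified. These are fixable---indeed, the second-moment recursion $\Bbb E[(i(t+1)-w_m)^2]\le (1-\tfrac{2c}{t+1})\Bbb E[(i(t)-w_m)^2]+O(t^{-2})$ with $c>1$ gives $\Bbb E[(i(t)-w_m)^2]=O(t^{-1})$, and then the subsequence/Borel--Cantelli argument from the proof of Theorem~\ref{thm5} yields exactly $\a<1/3$. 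So your approach is sound, just longer; the paper's shortcut is to have packaged all of this into Lemma~\ref{general}/\ref{extensions} once and for all.
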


\begin{remark}\label{w_m}

Let $w_m$ the unique positive root of $(1-w)^m-w$ in $(0,1)$. As $m\to\infty$,
\[
w_m= \frac{\log m}{m}+O\bigl(m^{-1}\log\log m\bigr).
\]

So, for $m$ large and both models, a.s. for all large enough $t$ the greedy algorithm delivers an independent set containing a fraction $\sim\frac{\log m}{m}$
of all vertices in $[t]$. It was proved in \cite{FGPR} that for each large $t$ with probability $1-o(1))$ the fraction of vertices in the largest independent set in the PAM graph process and in the UAM graph process is at most $(4+o(1))\frac{\log m}{m}$ and $(2+o(1))\frac{\log m}{m}$,
respectively. We conjecture that for each of the processes there exists a corresponding constant $c$ such that, for
$m\to\infty$, a.s. for all large $t$ the largest independent set contains a fraction $\sim c\frac{\log m}{m}$ of all $t$ vertices.

Since $I(t)$ is dominating, our results prove, for $m$ large, a.s. existence for all large $t$ of relatively small
dominating sets, of cardinality $\sim t\frac{\log m}{m}$.
\end{remark}

\section{Descendant trees}\label{sec:descendant}
Instead of referring the reader back to ``Our results'' section, we start this, and other proof sections,
with formulating in full the claim in question.

\subsection*{Proof of Theorem~\ref{th1}}

\begin{customthm}{2.2}
Suppose that $m=1$ and $\delta>-1$ and set $p(t):=X(t)/t$. Then almost surely (i.e. with probability $1$), $\lim p(t)$ exists, and its distribution is the mixture of two beta-distributions, with parameters
$a=1$, $b=r-\frac{1}{2+\delta}$ and $a=\frac{1+\delta}{2+\delta}$, $b=r$, weighted by $\frac{1+\delta}{(2+\delta)r-1}$ and $\frac{(2+\delta)(r-1)}{(2+\delta)r-1}$ respectively. Consequently a.s. $\liminf_{t\to\infty}p(t)>0$.
\end{customthm}

\begin{proof}For $t\ge r>1$, let $X(t)=X_{m,\delta}(t)=X_{m,\delta}(t,r)$ and $Y(t)=Y_{m,\delta}(t)=Y_{m,\delta}(t,r)$ denote the size and the total degree of the vertices in the vertex set of the subtree $T(t)=T_{m,\delta}(t,r)$ rooted at $r$; so $X(r)=1$ and $Y(r)\in [m,2m]$, where $m$ ($2m$ resp.) is attained when vertex $r$ forms no loops (forms $m$ loops resp.) at itself. Introduce  $p(t)=p_Y(t)=\frac{Y(t)}{2mt}$ and $p_X(t)=\frac{X(t)}{t}$. This notation will be used in the proof of Theorem \ref{th2} as well, but of course $m=1$ Theorem~\ref{th1}.

Here
\[
Y(t)=\left\{\begin{aligned}
&2X(t),&&\text{if }r\text{ looped on itself},\\
&2X(t)-1,&&\text{if }r\text{ selected a vertex in }[r-1].\end{aligned}\right.
\]
(In particular, $p_X(t) =p(t)+O(t^{-1})$.) So, by \eqref{n1},
\begin{equation*}
\begin{aligned}
\mathbb P(X(t+1)&=X(t)+1|\circ)=\frac{Y(t)+\delta X(t)}{(2+\delta)t +(1+\delta)}\\
&=\left\{\begin{aligned}
&\frac{(2+\delta)X(t)}{(2+\delta)t+(1+\delta)},&&\text{if }r\text{ looped on itself},\\
&\frac{(2+\delta)X(t)-1}{(2+\delta)t+(1+\delta)},&&\text{if }r\text{ selected a vertex in }[r-1].\end{aligned}\right.
\end{aligned}
\end{equation*}
Thus we are led to consider the process $X(t)$ such that
\begin{align*}
\mathbb P(X(t+1)=X(t)+1|\circ)   &=   \frac{(2+\delta)X(t)+\ga}{(2+\delta)t+(1+\delta)}\\
\mathbb P(X(t+1)=X(t)|\circ)   &=   1 - \mathbb P(X(t+1)=X(t)+1|\circ) ,
\end{align*}
$\ga=0$ if $r$ looped on itself, $\ga=-1$ if $r$ selected a vertex in $[r-1]$.
Letting $\be=\frac{1+\delta}{2+\delta}$, the above equation can be written as
\beq{jumpup}{
\begin{aligned}
\mathbb P(X(t+1)=X(t)+1|\circ)   &=   \frac{X(t) + \ga/(2+\delta)}{t+\be}\\
\mathbb P(X(t+1)=X(t)|\circ)   &=   1 - \frac{X(t) + \ga/(2+\delta)}{t+\be}.
\end{aligned}
}

For $\delta=0$ the following claim was proved in Pittel \cite{Pit2}. We denote by $z^{(\ell)}$ the rising factorial $\prod_{j=0}^{\ell-1}(z+j)$.

\begin{lemma}\label{Lem1} 
% Let $(z)^{(\ell)}=\prod_{j=0}^{\ell-1}(z+j)$,
Let $\be=\frac{1+\delta}{2+\delta}$ and $Z(t) = X(t) + \frac{\ga}{2+\delta}$. Then, conditioned on the attachment record during the time interval
$[r,t]$, i.e. starting with attachment decision by vertex $r$, we have
\[
\mean{Z(t+1)^{(\ell)}\big| \circ}=  \bfrac{t+\be+\ell}{t+\be} Z(t)^{(\ell)}.
\]
Consequently $M(t):=\frac{Z(t)^{(\ell)}}{(t+\be)^{(\ell)}}$ is a martingale.
\end{lemma}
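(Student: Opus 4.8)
The plan is a direct one-step computation from the dynamics in \eqref{jumpup} combined with two elementary identities for rising factorials; no probabilistic subtlety is involved beyond observing that $Z(t)=X(t)+\ga/(2+\delta)$ is measurable with respect to the attachment record up to time $t$ (in particular $\ga$ is fixed once vertex $r$ has made its choice), and that $0<Z(t)\le t<t+\be$, so that $Z(t)/(t+\be)$ is a genuine probability. Since $Z(t)$ differs from $X(t)$ by the deterministic constant $\ga/(2+\delta)$, \eqref{jumpup} says that, conditioned on the record up to time $t$, we have $Z(t+1)=Z(t)+1$ with probability $Z(t)/(t+\be)$ and $Z(t+1)=Z(t)$ otherwise.

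First I would write
\[
\mean{Z(t+1)^{(\ell)}\big|\circ}=\frac{Z(t)}{t+\be}\,\brac{Z(t)+1}^{(\ell)}+\brac{1-\frac{Z(t)}{t+\be}}Z(t)^{(\ell)}.
\]
Next I would use the identity $z\,(z+1)^{(\ell)}=z^{(\ell+1)}=z^{(\ell)}(z+\ell)$ (all three sides equal $z(z+1)\cdots(z+\ell)$) to rewrite the first summand as $\frac{1}{t+\be}\,Z(t)^{(\ell)}\bigl(Z(t)+\ell\bigr)$. Factoring out the common $Z(t)^{(\ell)}$, the bracketed scalar collapses to
\[
\frac{Z(t)+\ell}{t+\be}+1-\frac{Z(t)}{t+\be}=\frac{t+\be+\ell}{t+\be},
\]
which is exactly the asserted conditional identity $\mean{Z(t+1)^{(\ell)}\mid\circ}=\bfrac{t+\be+\ell}{t+\be}Z(t)^{(\ell)}$.

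For the martingale claim I would divide both sides by $(t+1+\be)^{(\ell)}$ and use the telescoping relation $(t+1+\be)^{(\ell)}=\frac{t+\be+\ell}{t+\be}\,(t+\be)^{(\ell)}$, obtained by shifting the index in $\prod_{j=0}^{\ell-1}(t+\be+j)$ by one; the factor $\frac{t+\be+\ell}{t+\be}$ then cancels and $\mean{M(t+1)\mid\circ}=Z(t)^{(\ell)}/(t+\be)^{(\ell)}=M(t)$. Integrability and adaptedness of $M(t)$ are immediate since $0\le Z(t)\le t$, so $\{M(t)\}$ is a martingale. There is no real obstacle here; the only points deserving care are the two rising-factorial identities and the observation—already built into the reduction to \eqref{jumpup}—that the specific choices $\be=(1+\delta)/(2+\delta)$ and the shift $\ga/(2+\delta)$ are exactly what make the up-probability equal to the clean ratio $Z(t)/(t+\be)$, without which the telescoping would not go through.
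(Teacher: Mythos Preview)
Your proof is correct and substantially simpler than the paper's. The paper first computes $\mathbb E[Z^k(t+1)\mid\circ]$ for each power $k$, then expands $z^{(\ell)}=\sum_k s(\ell,k)z^k$ via signless Stirling numbers of the first kind, and reduces the claim to the combinatorial identity $\ell\, s(\ell,i)=\sum_{k=i}^{\ell}\binom{k}{i-1}s(\ell,k)$, which is then verified by comparing exponential generating functions. You bypass this entire detour with the one-line rising-factorial identity $z\,(z+1)^{(\ell)}=z^{(\ell+1)}=z^{(\ell)}(z+\ell)$, after which the algebra collapses immediately. Your route is clearly the more natural one here: working directly with rising factorials rather than passing to the monomial basis avoids introducing Stirling numbers only to eliminate them again. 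The paper's approach does yield the Stirling identity as a byproduct, but that identity plays no further role in the paper, so nothing is lost by your shortcut.
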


\begin{proof} 
By \eqref{jumpup}, we have: for $k\ge 1$, and $t\ge r$,
\begin{equation}\label{2n}
\begin{aligned}
\mathbb E[Z^k(t+1)|\circ]&=(Z(t)+1)^k\,\frac{Z(t)}{t+\be}+Z^k(t)\left(1-\frac{Z(t)}{t+\be}\right)\\
&=\frac{Z(t)}{t+\be}\sum_{j=0}^k\binom{k}{j} Z^j(t)+Z^k(t)\left(1-\frac{Z(t)}{t+\be}\right)\\
&=Z^k(t)+\frac{Z(t)}{t+\be}\sum_{j=0}^{k-1}\binom{k}{j} Z^j(t)\\
&=Z^k(t)\frac{t+\be+k}{t+\be}+\frac{1}{t+\be}\sum_{j=1}^{k-1}\binom{k}{j-1}Z^j(t).				
\end{aligned}
\end{equation}
Next recall that
\begin{equation}\label{Com1}
z^{(\ell)}=\sum_{k=1}^{\ell} z^k s(\ell,k),
\end{equation}
where $s(\ell,k)$ is the signless, first-kind, Stirling number, i.e. the number of permutations of the set $[\ell]$ with $k$ cycles. In particular,
\begin{equation}\label{Com}
\sum_{\ell\ge 1}\eta^{\ell}\frac{s(\ell,k)}{\ell!}=\frac{1}{k!}\log^k\frac{1}{1-\eta},\quad |\eta|<1,
\end{equation}
Comtet~\cite[Section 5.5]{Com}. Using \eqref{2n}  and \eqref{Com1}, we have
\begin{align*}
&\mathbb E\bigl[Z^{(\ell)}(t+1)|\circ\bigr]= \sum_{k=1}^{\ell} s(\ell,k) \mathbb E\bigl[Z^k(t+1)|\circ\bigr]\\
&= (t+\be)^{-1}\sum_{k=1}^{\ell}s(\ell,k)\cdot
\Biggl(\!(t+\be+k) Z^k(t)+\sum_{j=0}^{k-1}\binom{k}{j-1} Z^j(t)\Biggr)			 \\
&=:(t+\be)^{-1}\sum_{i=1}^{\ell} \sigma(\ell,i)Z^i(t),\\
\sigma(\ell,i)&\!=\!\left\{\begin{aligned}
&(t+\be+\ell) s(\ell,\ell),&&\text{if }i=\ell,\\
&(t+\be) s(\ell,i)+\sum_{k=i}^{\ell} s(\ell,k)\binom{k}{i-1},&&\text{if }i<\ell.\end{aligned}\right.
\end{align*}
We need to show that $\sigma(\ell,i)=(t+\be+\ell)s(\ell,i)$ for $k<\ell$, which is equivalent to
\[
\ell s(\ell,i)=\sum_{k=i}^{\ell}s(\ell,k)\binom{k}{i-1}.
\]
To prove the latter identity, it suffices to show that, for a fixed $i$, the exponential generating functions of the two sides coincide. By \eqref{Com},
\begin{align*}
&\quad\sum_{\ell\ge 1}\frac{\eta^{\ell}}{\ell!}\sum_{k=i}^{\ell}s(\ell,k)\binom{k}{i-1}=\sum_{k\ge i}\binom{k}{i-1}\sum_{\ell\ge k}\frac{\eta^{\ell}}{\ell!} s(\ell,k)
\\
&=\sum_{k\ge i}\binom{k}{i-1} \frac{1}{k!}\log^k\frac{1}{1-\eta}=\frac{1}{(i-1)!}\left(\log^{ -1}\frac{1}{1-\eta}\right) \sum_{s\ge 1}\frac{1}{s!}\log^s\frac{1}{1-\eta}\\
&=\frac{1}{(i-1)!}\left(\log^{i-1}\frac{1}{1-\eta}\right)\left(\frac{1}{1-\eta}-1\right)=\frac{1}{(i-1)!}\left(\log^{i-1}\frac{1}{1-\eta}\right)\frac{\eta}{1-\eta}.
\end{align*}
And, using \eqref{Com} again,
\begin{align*}
&\sum_{\ell\ge 1}\frac{\eta^{\ell}}{\ell!}\,\ell s(\ell,i)=\eta\sum_{\ell\ge 1}\frac{\ell\eta^{\ell-1}}{\ell!}\,s(\ell,i)\\
&=\eta\frac{d}{d\eta}\Biggl(\frac{1}{i!}\log^i\frac{1}{1-\eta}\Biggr)=\frac{1}{(i-1)!}\left(\log^{i-1}\frac{1}{1-\eta}\right)\frac{\eta}{1-\eta}. \qedhere
\end{align*}
\end{proof}
To identify the $\lim_{t\to\infty} p(t)$, recall that the classic beta probability distribution has density 
\[
f(x;a,b)=\frac{\Gamma(a+b)}{\Gamma(a)\Gamma(b)}x^{a-1}(1-x)^{b-1},\quad x\in (0,1),
\]
parametrized by two parameters $a>0$, $b>0$, and moments
\begin{equation}\label{moments}
\int_0^1 x^{\ell} f(x;a, b)\,dx=\prod_{j=0}^{\ell-1}\frac{a+j}{a+b+j}.
\end{equation}
We can now complete the proof of Theorem \ref{th1}. 
By Lemma \ref{Lem1}, we have $\mathbb E [M(t)|\ga]=M(r)$, i.e.
\[
\mathbb E\Biggl[\frac{\left(X(t)+\frac{\ga}{2+\delta}\right)^{(\ell)}}{(t+\be)^{(\ell)}}\,\Bigg|\ga\Biggr]=\frac{\left(1+\frac{\ga}{2+\delta}\right)^{(\ell)}}{(r+\be)^{(\ell)}}.
\]
For every $\ell\ge 1$, by martingale convergence theorem, conditioned on $\ga$, there exists an integrably  finite $\mathcal M_{\ga,\ell}$ such that a.s.
\[
\lim_{t\to\infty}\frac{\left(X(t)+\frac{\ga}{2+\delta}\right)^{(\ell)}}{(t+\be)^{(\ell)}}=\mathcal M_{\ga,\ell},\quad \ell\ge 0,
\]
and
\[
\mathbb E[\mathcal M_{\ga,\ell}]=\frac{\left(1+\frac{\ga}{2+\delta}\right)^{(\ell)}}{(r+\be)^{(\ell)}}.
\]
So, using the notation $p_X(t)=X(t)/t$, we have: a.s.
\begin{equation}\label{m=1}
\begin{aligned}
\lim_{t\to\infty}(p_X(t))^{\ell}=\mathcal M_{\ga,\ell}=(\mathcal M_{\ga,1})^{\ell},
\end{aligned}
\end{equation}
and
\[
\mathbb E\bigl[(\mathcal M_{\ga,1})^{\ell}\bigr]=\frac{\left(1+\frac{\ga}{2+\delta}\right)^{(\ell)}}{(r+\be)^{(\ell)}}=\prod_{j=0}^{\ell-1}\frac{1+\frac{\ga}{2+\delta}+j}{r+\be+j}.
\]
This means that $\mathcal M_{\ga,1}$ is beta-distributed with parameters $1+\frac{\ga}{2+\delta}$ and $r+\be-1-\frac{\ga}{2+\delta}$.  By the definition of
$\ga$ and \eqref{n1}, we have
\[
\mathbb P(\ga=0)=\frac{1+\delta}{(2+\delta)(r-1)+(1+\delta)}=\frac{1+\delta}{2r-1+\delta r}.
\]
We conclude that $\lim_{t\to\infty}p(t)$ has the distribution which is the mixture of the two beta distributions, with parameters $a=1$, $b=r-\frac{1}{2+\delta}$, and $a=\frac{1+\delta}{2+\delta}$, $b=r$, weighted by $\frac{1+\delta}{(2+\delta)r-1}$ and 
$\frac{(2+\delta)(r-1)}{(2+\delta)r-1}$ respectively.
This completes the proof of Theorem~\ref{th1}.
\end{proof}

\subsection{Proof of Theorem \ref{th2}}

\begin{customthm}{2.3}
Let $m>1$ and $\delta>-m$ and let $p_X(t)=X(t)/t$, $p_Y(t)=Y(t)/(2mt)$, where $Y(t)$ is the total degree of the descendants of $r$ at time $t$.
Then almost surely  $\lim_{t\to\infty} p_X(t)=\lim_{t\to\infty} p_Y(t)=1$. 
\end{customthm}

\begin{proof}We need to derive tractable formulas/bounds for the conditional distribution of $Y(t+1)-Y(t)$. First, let us evaluate the 
conditional probability that selecting the second endpoints of the $m$ edges incident to vertex  $t+1$ no loops will be formed.
Suppose there has been no loop in the first $i-1$ steps, $i\in [m]$; call this event $\mathcal E_{i-1}$.  On event $\mathcal E_{i-1}$, as the $i$-th edge incident to $t+1$ is about to attach its second end to a vertex in $[t]\cup\{t+1\}$, the total degree of
all these vertices is $2mt+i-1$ ($1\le i\le m$). So, by the definition of the transition probabilities (items {\bf (a), (b), (c)\/}) we have
\[
\mathbb P(\mathcal E_i |\circ)= \frac{2mt +i-1+t\delta}{2mt+2(i-1)+t\delta +1+\frac{i\delta}{m}},
\]
``$\circ$'' indicating conditioning on the full record of $i-1$ preceding attachments such that the event $\mathcal E_{i-1}$ holds. Crucially this conditional probability
depends on $i$ only. Therefore the probability of a given full {\it loops-free\/} record of the $m$ attachments is equal to the corresponding probability for the ``no loops in $m$ attachments process'',  multiplied by
\begin{equation}\label{Pmt=}
 \Pi_m(t):=\prod_{i=1}^m\frac{2mt +i-1+t\delta}{2mt+2(i-1)+t\delta +1+\frac{i\delta}{m}}=1-O(t^{-1}).
\end{equation}

\begin{lemma}\label{lem1}  If no loops are allowed in the transition from $t$ to $t+1$, then 
for $a\in [m]$,
\[
\pr{Y(t+1) =Y(t)+m+a \mid \circ}
=\binom{m}{a}   \frac{(Y(t)+\delta X(t))^{(a)} \cdot \bigl(2mt-Y(t)+\delta(t-X(t))^{(m-a)}}  {\bigl((2m+\delta)t\bigr)^{(m)}}
\]
and 
\[
\pr{Y(t+1) =Y(t)\mid \circ} = \frac{ \bigl(2mt-Y(t)+\delta(t-X(t))^{(m)}}  {\bigl((2m+\delta)t\bigr)^{(m)}}.
\]
%\[
%\begin{aligned}
%\mathbb P(Y(t+1) =Y(t)+m+a|\circ)
%&=\binom{m}{a}\frac{(Y(t)+\delta X(t))^{(a)} \bigl(2mt-Y(t)+\delta(t-X(t))^{(m-a)}}{\bigl((2m+\delta)t\bigr)^{(m)}}, \ \textrm{ for }a\in [m]\\
%\mathbb P(Y(t+1) =Y(t)\,|\,\circ) &= \frac{ \bigl(2mt-Y(t)+\delta(t-X(t))^{(m)}}{\bigl((2m+\delta)t\bigr)^{(m)}}.
%\end{aligned}
%\]
\end{lemma}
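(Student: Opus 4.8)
The plan is to compute the conditional law of $Y(t+1)-Y(t)$ directly from the sequential attachment rule for the process in which vertex $t+1$ sends its $m$ edges one at a time but no loops are permitted. Since we are told that the loops-free transition probabilities are obtained from \eqref{defprob} simply by striking out the ``$x=t+1$'' case and renormalizing, each of the $m$ second-endpoints lands in $[t]$, and at the moment the $j$-th edge is placed the ``weight'' of a vertex $x\in[t]$ is $d(x)+\delta$ where $d(x)$ is its current degree (including edges already received from $t+1$ in this very step). Crucially, the total weight of all of $[t]$ just before the $j$-th edge is $(2m+\delta)t+(j-1)$, because each earlier edge of $t+1$ added exactly $1$ to the degree sum of $[t]$. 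This is what produces the denominator $\bigl((2m+\delta)t\bigr)^{(m)}$ after multiplying the $m$ successive normalizers $\prod_{j=1}^m\bigl((2m+\delta)t+(j-1)\bigr)$.

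Next I would track the numerator. Split the $m$ edges of $t+1$ according to whether each one attaches inside the descendant tree $T(t)$ of $r$ or outside it; exactly $Y(t+1)=Y(t)+m+a$ holds iff $a$ of the $m$ edges go inside $T(t)$ (each such edge raises $Y$ by $2$: one for the new endpoint in $[t]$ and one because $t+1$ itself joins $T(t)$ — wait, more carefully: once the first in-tree edge is placed, $t+1$ becomes a descendant, so its remaining edges also contribute; the bookkeeping is that $Y$ increases by $m$ from $t+1$'s own degree plus one for each of the $a$ in-tree endpoints, total $m+a$). The ``in-tree'' weight just before the $j$-th edge is $Y(t)+\delta X(t)$ plus the number of in-tree edges of $t+1$ already placed, and the ``out-of-tree'' weight is $2mt-Y(t)+\delta(t-X(t))$ plus the number of out-of-tree edges already placed. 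Summing the product of the $m$ step-probabilities over the $\binom{m}{a}$ orderings of which steps are in-tree, the in-tree factors telescope to the rising factorial $(Y(t)+\delta X(t))^{(a)}$ and the out-of-tree factors to $\bigl(2mt-Y(t)+\delta(t-X(t))\bigr)^{(m-a)}$, independently of the interleaving pattern — this factorization across orderings is the one genuinely delicate point. The case $Y(t+1)=Y(t)$ is $a=0$, where there is only one ordering and the numerator is the single rising factorial $\bigl(2mt-Y(t)+\delta(t-X(t))\bigr)^{(m)}$.

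The main obstacle, as flagged above, is verifying cleanly that the sum over the $\binom{m}{a}$ interleavings of in-tree and out-of-tree steps collapses to a product of two rising factorials times $\binom{m}{a}$, i.e.\ that the answer does not depend on the order in which the in-tree and out-of-tree edges are emitted. The clean way to see this is the standard Vandermonde/Chu identity for rising factorials: for any fixed sequence of ``in/out'' labels with $a$ in's, the product of the corresponding numerators over the $m$ steps equals $(Y(t)+\delta X(t))^{(a)}\cdot\bigl(2mt-Y(t)+\delta(t-X(t))\bigr)^{(m-a)}$, because the $k$-th in-step (in chronological order) always contributes the factor $\bigl(Y(t)+\delta X(t)+(k-1)\bigr)$ regardless of how many out-steps precede it, and symmetrically for out-steps. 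Hence every one of the $\binom{m}{a}$ terms is identical, giving the stated binomial coefficient. Once this is in hand, assembling numerator over denominator yields both displayed formulas, and a sanity check that $\sum_{a=0}^m \pr{Y(t+1)=Y(t)+m+a\mid\circ}=1$ follows from the Chu--Vandermonde identity $\sum_{a=0}^m\binom{m}{a}u^{(a)}v^{(m-a)}=(u+v)^{(m)}$ with $u+v=(2m+\delta)t$, which I would include as a one-line confirmation.
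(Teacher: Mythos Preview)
Your proposal is correct and follows essentially the same route as the paper's proof: both track the $m$ sequential attachments via in-tree/out-of-tree indicators, note that the $j$-th step's normalizer is $(2m+\delta)t+j-1$, and then observe that for any fixed interleaving with $a$ in-steps the numerator factors as $(Y(t)+\delta X(t))^{(a)}\bigl(2mt-Y(t)+\delta(t-X(t))\bigr)^{(m-a)}$, so that summing over the $\binom{m}{a}$ patterns gives the stated formula. The paper phrases the ``every interleaving gives the same product'' step via the observation that $\{\mu_i:\mathbb I_i=1\}=\{0,\dots,a-1\}$ and $\{i-1-\mu_i:\mathbb I_i=0\}=\{0,\dots,m-a-1\}$, which is exactly your ``the $k$-th in-step always contributes the factor with $+(k-1)$'' remark; your added Chu--Vandermonde sanity check is a nice extra.
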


\begin{proof} Vertex $t+1$ selects, in $m$ steps, a sequence $\{v_1,\dots, v_m\}$ of $m$ vertices from $[t]$, with $t$ choices for every selection. Introduce $\Bbb{\boldsymbol{I}}=\{\Bbb I_1,\dots,\Bbb I_m\}$, where $\Bbb I_i$ is the indicator of the event $\{v_i\in V(T(t))\}$. The total vertex degree
of $[t]$ (of $V(T(t))$ respectively) right before step $i$ is $2mt + i-1$ $\bigl(Y(t)+\mu_i\text{ respectively}, \mu_i:=|\{j<i: \Bbb I_j=1\}|\bigr)$. Conditioned on this
prehistory, 
\begin{align*}
\mathbb P(\Bbb I_i=1)&=\frac{Y(t)+\delta X(t)+\mu_i}{2mt+\delta t+i-1},\\
 \mathbb P(\Bbb I_i=0)&=\frac{2mt-Y(t)+\delta(t-X(t))+i-1-\mu_i}{2mt+\delta t+ i -1}.
\end{align*}
Therefore a sequence $\Bbb{\boldsymbol{I}}$ will be the outcome of the $m$-step selection with probability
\begin{multline*}
\qquad\qquad\qquad\mathbb P(\Bbb{\boldsymbol{I}})=\left(\prod_{i\in [m]} \Bigl((2m+\delta)t +i-1\bigr)\Bigr)\right)^{-1}\\
\times\prod_{i:\, \Bbb I_i=1}\Bigl(Y(t)+\delta X(t)+\mu_i\Bigr)\,\,\cdot \prod_{i:\, \Bbb I_i=0}\Bigl(2mt-Y(t)+\delta(t-X(t))+i-1-\mu_i\Bigr).
\end{multline*}
 Furthermore, for $a\in [m]$, on the event $\{Y(t+1)=Y(t)+m+a\}$ for each admissible $\Bbb{\boldsymbol{I}}$ we have 
 \[
 \{\mu_i\}=\{0,1,\dots, a-1\},\quad \{i-1-\mu_i\}=\{0,1,\dots, m-a-1\},
\]
so that
\[
\mathbb P(\Bbb{\boldsymbol{I}})= \frac{(Y(t)+\delta X(t))^{(a)} \bigl(2mt-Y(t)+\delta(t-X(t))^{(m-a)}}{\bigl((2m+\delta)t\bigr)^{(m)}}.
\]
Since the total number of admissible sequences $\Bbb{\boldsymbol{I}}$ is $\binom{m}{a}$, we obtain the first formula in Lemma~\ref{lem1}. The second formula is the case of
$\mathbb P(\Bbb{\boldsymbol{I}})$ with ${a=0}. \qedhere$ 
\end{proof} 

It is clear from the proof of Lemma \ref{lem1} that $\{\mathbb P_m(a)\}_{0\le a\le m}$,
\[
\mathbb P_m(a):=\binom{m}{a}\frac{(Y(t)+\delta X(t))^{(a)} \bigl(2mt-Y(t)+\delta(t-X(t))^{(m-a)}}{\bigl((2m+\delta)t\bigr)^{(m)}},
\]
is a probability distribution of a random variable $D$, a ``rising-factorial'' counterpart of the binomial  $\mathcal D=\text{Bin}(m,p=Y(t)/2mt)$. 
Define the falling factorial $(x)_{\ell}=x(x-1)\cdots (x-\ell+1)$. It is well known that $\mathbb E[(\mathcal D)_{\mu}]=(m)_{\mu} p^{\mu}$, $(\mu\le m)$. 
For $D$ we have
\begin{multline}\label{2.2}
\mathbb E[(D)_{\mu}]=\sum_a(a)_{\mu} \mathbb P_m(a)=\frac{(m)_{\mu}\,(Y(t)+\delta X(t))^{(\mu)}}{\bigl((2m+\delta)t\bigr)^{(\mu)}}\cdot \sum_{a\ge \mu}\binom{m-\mu}{a-\mu}\\
\times \frac{(Y(t)+\delta X(t)+\mu)^{(a-\mu)}\bigl((2m+\delta)t+\mu-(Y(t)+\delta X(t)+\mu))^{((m-\mu)-(a-\mu))}}{(2mt+\mu)^{(m-\mu)}}\\
=\frac{(m)_{\mu}\,(Y(t)+\delta X(t))^{(\mu)}}{\bigl((2m+\delta)t\bigr)^{(\mu)}},
\end{multline}
since the sum over $a\ge \mu$ is $\sum_{\nu\ge 0}\mathbb P_{m-\mu}(\nu)=1$.

From Lemma \ref{lem1} and \eqref{2.2} we have: if no loops during the transition from $t$ to  $t+1$ are allowed, then
\begin{align}\label{2.3-}
\mathbb E[Y(t+1)-Y(t)|\circ] &=\sum_{a=1}^m (a+m) \mathbb P_m(a) \notag\\
&=\frac{m\bigl(Y(t)+\delta X(t)\bigr)}{(2m+\delta)t}+ m\left(1-\frac{\bigl((2m+\delta)t-Y(t)-\delta X(t)\bigr)^{(m)}}{\bigl((2m+\delta)t\bigr)^{(m)}}\right),
\end{align}
and
\begin{equation}\label{2add}
\mathbb E[X(t+1)-X(t)|\circ]=1-\frac{\bigl((2m+\delta)t-Y(t)-\delta X(t)\bigr)^{(m)}}{\bigl((2m+\delta)t\bigr)^{(m)}}.
\end{equation}
What if the ban on loops at the vertex $t+1$ is lifted? From the discussion right before Lemma \ref{lem1}, we see that 
both 
$\mathbb E[\bigl(Y(t+1)-Y(t)\bigr) \mathbb I(\text{no loops})|\circ]$ and 
$\mathbb E[\bigl(X(t+1)-X(t)\bigr) \mathbb I(\text{no loops})|\circ]$ 
are equal
to the respective RHS's in \eqref{2.3-} and \eqref{2add} {\it times\/} $\Pi_m(t)=1-O(t^{-1})$. Consequently, adding the terms $O(t^{-1})$ to
the RHS of \eqref{2.3-} and to the RHS of \eqref{2add} we obtain the sharp asymptotic formulas for $\mathbb E[Y(t+1)-Y(t)|\circ]$ and 
$\mathbb E[X(t+1)-X(t)|\circ]$ in the case of the loops-allowed model. We will also need
\[
p(t)=\frac{2m}{2m+\delta}\,p_Y(t)+\frac{\delta}{2m+\delta}\,p_X(t),
\]
where $p_Y(t)=\frac{Y(t)}{2mt}$ and $p_X(t)=\frac{X(t)}{t}$ as defined in the beginning of the section.
%As a reminder, Theorem \ref{th2} asserts
%\begin{theorem}\label{thmB} 
%Let $m>1$ and $\delta>-m$. Then almost surely  $\lim p_Y(t)=\lim p_X(t)=1$. 
%\end{theorem}

To continue the proof of Theorem~\ref{th2},   
we note first that $mX(t)\le Y(t)\le 2mX(t)$. The lower bound is obvious. The upper bound follows from induction on $t$: Suppose $Y(t)\le 2mX(t)$. If $X(t+1)=X(t)$, then $Y(t+1)=Y(t)\le 2mX(t)=2m X(t+1)$. If $X(t+1)=X(t)+1$, then $Y(t+1)\le Y(t)+2m\le 2mX(t)+2m=2mX(t+1)$.
Therefore, by the definition of $p(t)$,  we have
\begin{equation}\label{double}
\frac{p_X(t)}{2}\le p_Y(t)\le p_X(t) \Longrightarrow \frac{m+\delta}{2m+\delta}\,p_X(t)\le p(t) \le p_X(t);
\end{equation}
in particular, $p(t)\in [0,1]$ since $\delta\ge -m$.
We will also need
\[
\frac{\bigl((2m+\delta)t-Y(t)-\delta X(t)\bigr)^{(m)}}{\bigl((2m+\delta)t\bigr)^{(m)}} = (1-p(t))^m +O(t^{-1}).
\]
So, using \eqref{2.3-}, we compute
\begin{multline}\label{pY(t+1)}
\mathbb E[p_Y(t+1)|\circ]=\mathbb E\left[\frac{Y(t+1)}{2mt}\cdot \frac{t}{t+1}\Big|\circ\right]\\
=\frac{t}{t+1}\!\left(\!p_Y(t)+\frac{1}{2t}\bigl[1+p(t)-(1-p(t))^m\bigr]+O(t^{-2})\!\!\right)=p_Y(t)+q_Y(t),\\
 q_Y(t):=\frac{1}{2(t+1)}\bigl[1+p(t)-2p_Y(t)-(1-p(t))^m\bigr]+O(t^{-2}).
\end{multline}
Likewise
\begin{equation}\label{pX(t+1)}
\begin{aligned}
&\qquad\qquad\mathbb E[p_X(t+1)|\circ]=p_X(t)+q_X(t),\\
&q_X(t)=\frac{1}{t+1}\bigl[1-p_X(t)-(1-p(t))^m\bigr]+O(t^{-2}).
\end{aligned}
\end{equation}
Multiplying the equation \eqref{pY(t+1)} by $\frac{2m}{2m+\delta}$, the equation \eqref{pX(t+1)} by $\frac{\delta}{2m+\delta}$, and adding them, we obtain
\begin{equation}\label{p(t+1)}
\begin{aligned}
&\qquad\qquad\qquad\quad\mathbb E[p(t+1)|\circ]=p(t) +q(t),\\
&q(t):=\frac{m+\delta}{(2m+\delta)(t+1)}\bigl[1-p(t)-(1-p(t))^m\bigr]+O(t^{-2}).
\end{aligned}
\end{equation}
From the first line in \eqref{p(t+1)} it follows that  
\[
\sum_{t=1}^{\tau}\Bbb E[p(t+1)]=\sum_{t=1}^{\tau}\Bbb E[p(t)]+\sum_{t=1}^{\tau}\Bbb E[q(t)],
\]
implying that
\[
\limsup_{\tau\to\infty}\sum_{t\le\tau} \mathbf E[q(t)]\le \limsup_{\tau\to\infty}\Bbb E[p(\tau+1)]\le 1.
\]
Since $1-z-(1-z)^m\ge 0$ on $[0,1]$, the second line in \eqref{p(t+1)} implies that $|q(t)|\le q(t)+O(t^{-2})$.
Since $\sum_t t^{-2}<\infty$, we see that $\sum_t\mathbb E[|q(t)|]<\infty$. 

So a.s. there exists $Q:=\lim_{\tau\to\infty}\sum_{1\le t\le \tau} q(t)$, with $\mathbb E[|Q|]
\le \sum_t\mathbb E[|q(t)|]<\infty$, i.e. a.s. $|Q|<\infty$.  Introducing $Q(t+1)=\sum_{\tau\le t}q(\tau) $, we see from \eqref{p(t+1)} that $\{p(t+1)-Q(t+1)\}_{t\ge 1}$ is a martingale with $\sup_t |p(t+1)-Q(t+1)|\le 1 + \sum_{\tau\ge 1} |q(\tau)|$. By the martingale convergence theorem we obtain that there exists an integrable $\lim_{t\to\infty}(p(t)-Q(t))$, implying that a.s. there exists a random $p(\infty)=\lim_{t\to\infty}p(t)$. The \eqref{p(t+1)} also implies that
\[
1\ge \mathbb E[p(\infty)]=\frac{m+\delta}{2m+\delta}\sum_{t\ge 1}\frac{1}{t+1}\mathbb E\bigl[1-p(t)-(1-p(t))^m\bigr]+ O(1).
\]
Since $m+\delta>0$ and 
\[
\lim_{t\to\infty} \mathbb E\bigl[1-p(t)-(1-p(t))^m\bigr] =\mathbb E\bigl[1-p(\infty)-(1-p(\infty))^m\bigr],
\]
and the series $\sum_{t\ge 1} t^{-1}$ diverges, we obtain that $\mathbb P(p(\infty)\in \{0,1\})=1$. 

Recall  that  $p(t)\ge \frac{m+\delta}{2m+\delta}\, p_X(t)$. If we show that a.s. $\liminf_{t\to\infty}p_X(t)>0$, it will follow that
a.s.  $p(\infty)>0$, whence a.s. $p(\infty)=1$, implying (by $p(t)\le p_X(t)$) that a.s. $p_X(\infty)$ exists, and is $1$, and consequently
(by the formula for $p(t)$) a.s. $p_Y(\infty)$ exists, and is $1$.

So let's prove that a.s. $\liminf_{t\to\infty}p_X(t)>0$. 
Recall that we did prove the latter for $m=1$. To transfer this earlier result to $m>1$, we need to establish 
some kind of monotonicity with respect to $m$. The coupling described in Section~\ref{sec:results} to the rescue!

\begin{lemma}\label{lem:coupling}
For the coupled processes $\{G_{m,\delta}(t)\}$ and $\{G_{1,\delta/m}(mt)\}$, we have $X_{m,\delta}(t,r) \ge m^{-1} X_{1,\delta/m}(mt,mr)$.
\end{lemma}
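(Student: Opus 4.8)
The plan is to read the inequality directly off the Bollob\'as--Riordan collapsing map. Write $\phi$ for the map that sends the vertex $v_j$ of $G_{1,\delta/m}(mt)$ to the vertex $\lceil j/m\rceil$ of $G_{m,\delta}(t)$; thus $\phi$ collapses the block $B_i:=\{v_{(i-1)m+1},\dots,v_{im}\}$ onto the single vertex $i$, and in particular $v_{mr}\in B_r$, so $\phi(v_{mr})=r$. The whole argument reduces to one structural claim about $\phi$: \emph{if $v_j$ is a descendant of $v_{mr}$ in $G_{1,\delta/m}(mt)$, then $\phi(v_j)$ is a descendant of $r$ in $G_{m,\delta}(t)$.}

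To prove this claim I would take a decreasing path $v_{mr}=v_{j_k}<v_{j_{k-1}}<\dots<v_{j_0}=v_j$ of edges of $G_{1,\delta/m}(mt)$ witnessing the descendant relation, and push it through $\phi$. Setting $i_s:=\lceil j_s/m\rceil$, the block indices $r=i_k\le i_{k-1}\le\dots\le i_0$ are non-increasing, because $j_s$ is strictly decreasing and $x\mapsto\lceil x/m\rceil$ is non-decreasing; and by property (2) of the coupling each edge $v_{j_s}v_{j_{s+1}}$ with $i_s=i_{s+1}$ collapses to a loop at the vertex $i_s$, while each such edge with $i_s>i_{s+1}$ collapses to a genuine edge of $G_{m,\delta}(t)$ joining $i_s$ and $i_{s+1}$. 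Deleting the repeats from the sequence $(i_k,\dots,i_0)$ therefore leaves a strictly decreasing chain from $r$ up to $i_0$ along which consecutive terms are adjacent in $G_{m,\delta}(t)$, i.e.\ a decreasing path in $G_{m,\delta}(t)$ from $\phi(v_j)=i_0$ down to $r$. Hence $\phi(v_j)$ is a descendant of $r$, proving the claim.

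The claim then yields the lemma by a one-line count. Let $S:=\{i\le t:\ B_i\text{ contains a descendant of }v_{mr}\}$. The blocks partition the $mt$ vertices of $G_{1,\delta/m}(mt)$ into $t$ classes of size exactly $m$, so the number of descendants of $v_{mr}$, namely $X_{1,\delta/m}(mt,mr)$, is at most $m|S|$. On the other hand, by the claim every $i\in S$ is a descendant of $r$ in $G_{m,\delta}(t)$, and distinct blocks have distinct indices, so $|S|\le X_{m,\delta}(t,r)$. Combining the two bounds gives $X_{1,\delta/m}(mt,mr)\le m\,X_{m,\delta}(t,r)$, which is exactly the asserted inequality.

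The one delicate point---what I would treat as the main obstacle---is making the structural claim watertight, i.e.\ pinning down precisely how edges behave under collapsing: one must be sure that within-block edges of $G_{1,\delta/m}$ become loops (so they can be discarded from the path) and that across-block edges become honest edges between the images, so that the path survives collapsing with its block-index sequence non-increasing. This is exactly the content of items (1)--(2) in the description of the coupling in Section~\ref{sec:results} (spelled out in the Appendix); once that bookkeeping is invoked, the counting step above is immediate and no estimates are needed.
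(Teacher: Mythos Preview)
Your proof is correct and follows essentially the same approach as the paper's: both arguments rest on the observation that if $v_a$ is a descendant of $v_{mr}$ in $G_{1,\delta/m}(mt)$ then the collapsed vertex $\lceil a/m\rceil$ is a descendant of $r$ in $G_{m,\delta}(t)$, after which the inequality follows from the fact that each collapsed vertex absorbs exactly $m$ vertices of $G_{1,\delta/m}$. The paper states this structural claim in one line without justification, whereas you spell out the path-pushing argument explicitly; otherwise the proofs are identical.
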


\begin{proof}
Let us simply write $G_1$ and $G_m$ for the two graphs $G_{1,\delta/m}(mt)$ and $G_{m,\delta}(t)$, respectively. 
Similarly, write $T_1$ and $T_m$, respectively, for the descendant tree in $G_{1,\delta/m}(mt)$ rooted at $mr$ and the descendant tree in $G_{m,\delta}(t)$ rooted at $r$. 
If $v_a\in T_1$, i.e. $v_a$ is a descendant of $mr$, then for $b=\ceil{a/m}$ we have $w_b=\{v_{m(b-1)+i}\}_{i\in [m]}\ni v_a$, implying that
$w_b$ is a descendant of $r$ in $G_m$, i.e. $w_b\in T_m$. (The converse is generally false: if $w_b$ is a descendant of $r$, it does not mean that
every $v_{m(b-1)+i}$, ($i\in [m]$), is a descendant of $mr$.) Therefore
\[
X_{m,\delta}(t,r)=|V(T_m)|\ge m^{-1} |V(T_1)|=m^{-1}X_{1,\delta/m}(mt,mr). \qedhere
\]
\end{proof}
Thus, to complete the proof of the theorem,  i.e. for $\delta>-m$, we {\bf (a)\/} use Theorem \ref{th1}, to assert that for the process $\{G_{1,\delta/m}(t)\}$, a.s. $\lim_{t\to\infty} p_X (t)>0$; {\bf (b)\/} use Lemma \ref{lem:coupling}, to assert that a.s. $\liminf_{t\to\infty} p_X (t)>0$ for $\{G_{m,\delta}(t)\}$ as well. The proof of Theorem \ref{th2} is complete.

\end{proof}

\subsection{Proof of Theorem \ref{th2+}} 
 %Theorem \ref{th2+} states 
\begin{customthm}{2.4}
Consider the UAM graph process $G_{t,m}$. Given $r>1$, let $X(t)$ be the cardinality of the descendant tree rooted at vertex $r$, and let $p(t):=X(t)/t$. 
\begin{enumerate}
\item[\textup{(i)}] For $m=1$, almost surely, $\lim p(t)$ exists and
it has the same distribution as the minimum of $(r-1)$ independent $[0,1]$-Uniforms. Consequently a.s. $\liminf_{t\to\infty}p(t)>0$.
\item[\textup{(ii)}] For $m>1$, almost surely $\lim_{t\to\infty} p(t)=1$.
\end{enumerate}
\end{customthm}

\begin{proof} By the definition of the UAM process, we have
\begin{equation}\label{a}
\Bbb P(X(t+1)=X(t)+1|\circ)= 1 - (1-p(t))^m.
\end{equation}
{\bf (i)\/} Consider $m=1$. For $r=1$, we have $p(t)\equiv 1$. Consider $r\ge 2$. The equation \eqref{a} is the case $\delta=-1$, $\ga=0$ of \eqref{jumpup}. By Lemma
\ref{Lem1}, we claim that $M(t):=\frac{(X(t))^{(\ell)}}{t^{(\ell)}}$ is a martingale. So arguing as in the proof of  Theorem 
\ref{th1}, we obtain that almost surely (a.s.) $\lim p(t)=p(\infty)$ exists, and the limiting distribution of $p(\infty)$
is a beta-distribution with
parameters $a=1$ and $b=r-1$. That is, the limiting density is $(r-1)(1-x)^{r-2}$, $x\in [0,1]$. Therefore a.s. $\lim p(t)>0$.

{\bf (ii)\/} Consider $m>1$. Clearly $G_{t,1}\subset G_{t,m}$. Therefore a.s. $\liminf p(t)>0$ as well. Furthermore, it follows from \eqref{a} that
\[
\Bbb E[p(t+1)|\circ]= p(t)+\frac{1}{t+1}\bigl[1-p(t)-(1-p(t))^m\bigr],
\]
which is a special case of \eqref{p(t+1)}, with $O(t^{-2})$ dropped. So we obtain that 
$\Bbb P\bigl(p(\infty)\in \{0,1\}\bigr)=1$, which in combination with $\Bbb P(p(\infty)>0)=1$ imply that
$\Bbb P(p(\infty)=1)=1$.
\end{proof}

\section{A technical lemma}

In this section, we will prove Lemma~\ref{general}. We need the following Chernoff bound for its proof. (See e.g.\ \cite[Theorem 2.8]{JLR}.)
\begin{theorem}\label{thm:Chernoff}
If $X_1,\dots,X_n$ are independent Bernoulli random variables, $X=\sum_{i=1}^n X_i$, and $\la = \mathbb{E}[X]$, then
\[
\mathbb P(|X-\la| > \eps\la) < 2\exp\brac{-\eps^2\la/3} \quad \forall \eps\in (0, 3/2).
\]
\end{theorem}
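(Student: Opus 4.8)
The plan is to prove the bound by the standard exponential-moment (Chernoff) method, handling the upper and lower tails separately and combining them with a union bound. Write $p_i=\mathbb E[X_i]$, so that $\lambda=\sum_{i=1}^n p_i$. The first step is the moment generating function estimate: for every real $s$, independence together with the elementary inequality $1+x\le e^x$ gives
\[
\mathbb E\bigl[e^{sX}\bigr]=\prod_{i=1}^n\bigl(1+p_i(e^s-1)\bigr)\le\prod_{i=1}^n e^{p_i(e^s-1)}=e^{\lambda(e^s-1)}.
\]

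For the upper tail I would apply Markov's inequality to $e^{sX}$ with $s>0$: for $\varepsilon>0$,
\[
\mathbb P\bigl(X>(1+\varepsilon)\lambda\bigr)\le e^{-s(1+\varepsilon)\lambda}\,\mathbb E\bigl[e^{sX}\bigr]\le\exp\bigl(\lambda\bigl(e^s-1-s(1+\varepsilon)\bigr)\bigr),
\]
and optimize by taking $s=\log(1+\varepsilon)$, obtaining $\mathbb P\bigl(X>(1+\varepsilon)\lambda\bigr)\le e^{-\lambda\phi(\varepsilon)}$ with $\phi(\varepsilon):=(1+\varepsilon)\log(1+\varepsilon)-\varepsilon$. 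Symmetrically, applying Markov to $e^{-sX}$ with $s>0$ and taking $s=-\log(1-\varepsilon)$ (legitimate for $0<\varepsilon<1$) yields $\mathbb P\bigl(X<(1-\varepsilon)\lambda\bigr)\le e^{-\lambda\psi(\varepsilon)}$ with $\psi(\varepsilon):=\varepsilon+(1-\varepsilon)\log(1-\varepsilon)$; for $\varepsilon\ge 1$ the event $\{X<(1-\varepsilon)\lambda\}$ is empty since $X\ge 0$, so the lower tail contributes nothing there.

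It then remains to prove the two elementary one-variable inequalities $\phi(\varepsilon)\ge\varepsilon^2/3$ for $\varepsilon\in(0,3/2]$ and $\psi(\varepsilon)\ge\varepsilon^2/2$ for $\varepsilon\in(0,1)$. For the first I would establish the sharper bound $\phi(\varepsilon)\ge\frac{3\varepsilon^2}{6+2\varepsilon}$, valid for all $\varepsilon\ge 0$, by checking that $g(\varepsilon):=(6+2\varepsilon)\phi(\varepsilon)-3\varepsilon^2$ satisfies $g(0)=g'(0)=0$ and $g''(\varepsilon)=4\bigl(\log(1+\varepsilon)+\tfrac1{1+\varepsilon}-1\bigr)\ge 0$; since $6+2\varepsilon\le 9$ precisely when $\varepsilon\le 3/2$, this forces $\phi(\varepsilon)\ge\varepsilon^2/3$ exactly on that range, which is where the hypothesis $\varepsilon<3/2$ enters. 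The bound $\psi(\varepsilon)\ge\varepsilon^2/2$ follows similarly from $\psi(0)=\psi'(0)=0$ and $\psi''(\varepsilon)-1=\tfrac{\varepsilon}{1-\varepsilon}\ge 0$ on $[0,1)$. Finally, combining the two tail bounds via the union bound and using $\varepsilon^2/2\ge\varepsilon^2/3$,
\[
\mathbb P\bigl(|X-\lambda|>\varepsilon\lambda\bigr)\le e^{-\varepsilon^2\lambda/3}+e^{-\varepsilon^2\lambda/2}<2e^{-\varepsilon^2\lambda/3}
\]
for all $\varepsilon\in(0,3/2)$, as claimed.

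The only genuine work here is the calculus lemma that pins the constant $3$ down to $\varepsilon=3/2$; everything else is the textbook Chernoff computation. I expect the derivative bookkeeping verifying $g''\ge 0$ (equivalently, that $u\mapsto\log u+1/u-1$ is nonnegative for $u\ge 1$) to be the fiddliest point, but it is entirely routine, and in an exposition one would most likely just cite \cite{JLR} rather than reproduce it.
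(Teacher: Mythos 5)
The paper does not prove this statement at all; it is stated purely as an imported tool with the citation ``See e.g.\ \cite[Theorem 2.8]{JLR}''. Your argument is the standard Chernoff exponential-moment proof -- the moment generating function bound, Markov with the optimal tilt $s=\log(1\pm\varepsilon)$, and the one-variable estimates $\phi(\varepsilon)\ge\frac{3\varepsilon^2}{6+2\varepsilon}$ and $\psi(\varepsilon)\ge\varepsilon^2/2$ -- and is correct as written (I checked the calculus: $g''(\varepsilon)=4\bigl(\log(1+\varepsilon)+\tfrac1{1+\varepsilon}-1\bigr)\ge0$ and $g(0)=g'(0)=0$ do give $g\ge0$, and the threshold $6+2\varepsilon\le9$ is exactly where $\varepsilon<3/2$ enters). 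This is essentially the proof appearing in the cited source, so you have supplied precisely the routine derivation the authors chose to omit.
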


\begin{lemma}\label{general}
Let $\{X(t)\}_{t\ge 0}$ be a sequence of random variables such that $X(0)=0$ and $X(t+1)-X(t) \in \{0,1\}$. Let $x(t)=X(t)/t$ and assume
\begin{equation}\label{rec;ineq}
\Bbb E[x(t+1) - x(t)|\circ]\le \frac{h(x(t))}{t} +O(t^{-2}),
\end{equation}
where $h$ is a continuous, strictly decreasing function with $h(0)>0$ and $h(1)<0$, so that $h(x)$ has a unique root  $\rho \in(0,1)$. Assume also that $h'(x) < -1$ in $(0,1)$. Then, for any $\ga<1/3$, almost surely
\[
\lim_{t\to\infty} t^{\ga}\max\{0,x(t)-\rho\}=0.
\]
\end{lemma}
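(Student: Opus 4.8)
The plan is to show that $x(t)$ cannot substantially exceed $\rho$ for long, by combining a deterministic drift argument with a concentration estimate. First I would fix $\ga < 1/3$ and a small $\eps>0$, and study the ``bad'' event that $x(t) \ge \rho + t^{-\ga}$ for some large $t$. The intuition is that whenever $x(t)$ is a fixed amount above $\rho$, the hypothesis \eqref{rec;ineq} together with $h(\rho)=0$ and $h$ strictly decreasing gives a genuinely negative drift, $\Bbb E[x(t+1)-x(t)\mid\circ] \le (h(\rho+u) + o(1))/t \le -c u/t$ for $u = x(t)-\rho$ bounded away from $0$; but near $\rho$ this drift is only of order $u/t$, which is exactly the same order as the natural fluctuations of $x(t)$ over a doubling of $t$, so the delicate regime is $u \asymp t^{-\ga}$.

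The key technical device I would use is a one-step supermartingale / potential function argument. Consider $\phi(t) := \max\{0, x(t) - \rho - C t^{-\ga}\}$ for a suitable constant $C$, or more robustly work with the stopped process and estimate $\Bbb E[\,(x(t+1)-\rho)^+ \mid \circ\,]$ directly. From $X(t+1)-X(t)\in\{0,1\}$ we have the exact identity $x(t+1) - x(t) = \frac{1}{t+1}\big((X(t+1)-X(t)) - x(t)\big)$, so the increment is deterministically $O(1/t)$; this lets me control the martingale part. The plan is: (a) show that on the set where $x(t) - \rho \ge t^{-\ga}$, the drift pushes $x$ down by an amount $\gtrsim t^{-1}(x(t)-\rho)$, which since $h'<-1$ actually beats the $\frac{d}{dt}(t^{-\ga})$ ``moving target'' rate $\ga t^{-\ga-1}$ for all large $t$; (b) since the per-step increments are bounded by $O(1/t)$, apply the Chernoff/Azuma-type bound (Theorem~\ref{thm:Chernoff}, applied to the Bernoulli increments $X(t+1)-X(t)$ over a dyadic block $[T, 2T]$) to show that the total fluctuation of $x$ over such a block is $O(T^{-1/2}\log T)$ with probability $1 - O(T^{-2})$ or better; (c) combine via Borel--Cantelli over dyadic scales $T = 2^k$: once $x(T) \le \rho + T^{-\ga'}$ for some $\ga < \ga' < 1/2$, the negative drift keeps it there throughout $[T,2T]$ up to the fluctuation term, which is $o(T^{-\ga})$ since $\ga<1/2$. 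The condition $h'<-1$ is what guarantees that the ``linearized'' recursion $u(t+1) \approx u(t)(1 + h'(\rho)/t) = u(t)(1 - |h'(\rho)|/t)$ has contraction exponent $|h'(\rho)| > 1$, strictly dominating the $\ga<1/3<1$ growth of $t^{\ga}$, so $t^{\ga}u(t)\to 0$.

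The main obstacle I anticipate is handling the regime where $x(t)-\rho$ is small (comparable to the fluctuation scale), since there the drift is too weak to dominate pathwise and one must use concentration over a whole block rather than step by step; getting the bookkeeping right so that the block-fluctuation bound ($\sim T^{-1/2}\log T$) and the drift-induced decay ($\sim$ multiplying $u$ by $(T/2T)^{|h'(\rho)|-\eps}$ over the block) together yield $\max\{0,x(t)-\rho\} = o(t^{-\ga})$ simultaneously for all $t$ in the block, with failure probability summable in $k$, is the crux. A clean way to organize this is to define the stopping time $\tau_k = \inf\{t \ge 2^k : x(t) > \rho + t^{-\ga}\}$, show $\Bbb P(\tau_k < 2^{k+1}) = O(2^{-2k})$ using the supermartingale inequality plus Chernoff on the block, and conclude by Borel--Cantelli that a.s. $x(t) \le \rho + t^{-\ga}$ for all large $t$, which is exactly the assertion since $\ga<1/3$ was arbitrary below $1/3$. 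Note one only needs the \emph{upper} tail $\max\{0, x(t)-\rho\}$, so the lower deviations of $x$ never need to be controlled, which simplifies matters considerably.
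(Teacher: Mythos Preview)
Your plan contains the right ingredients---negative drift above $\rho$, concentration of Bernoulli increments, Borel--Cantelli---and could likely be made to work, but it is organized rather differently from the paper's argument and leaves the central step somewhat vague.

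The paper avoids dyadic blocks and potential functions entirely. For a \emph{fixed} target time $t$ it sets $\eps=\eps_t:=t^{-1/3}\log t$ and bounds $\mathbb P(x(t)>\rho+\eps)$ directly. The decomposition is by the \emph{last crossing time}: let $T$ be the last $\tau<t$ with $x(\tau)\le \rho+\eps/2$. A simple counting argument forces $t-T\gtrsim t\eps$, so $T$ ranges only up to $t-\Theta(t\eps)$. On the interval $(T,t]$ one has $x(\tau)>\rho+\eps/2$, and the crucial observation is that the one-step jump probability satisfies
\[
\mathbb P\bigl(X(\tau+1)=X(\tau)+1\mid\circ\bigr)=h(x(\tau))+x(\tau)+O(\tau^{-1}),
\]
which follows from the exact identity $\mathbb E[x(\tau+1)\mid\circ]=x(\tau)+(p-x(\tau))/(\tau+1)$ combined with the hypothesis. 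Since $h'<-1$ means $h(x)+x$ is \emph{decreasing}, this jump probability is $<\rho+O(\tau^{-1})$ whenever $x(\tau)>\rho+\eps/2$. Hence the jumps are stochastically dominated by \emph{independent} Bernoulli$(\rho+O(\tau^{-1}))$ variables, and Chernoff on their sum over $(T,t]$ gives $\mathbb P(\mathcal E_T)\le\exp(-\Theta(\eps^2(t-T)))\le\exp(-\Theta(\log^3 t))$. A union bound over the $O(t)$ values of $T$ then yields a tail summable in $t$.

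Compared with your proposal, the paper's route has two advantages worth noting. First, converting the drift hypothesis into an explicit upper bound on the jump probability (rather than working with the drift of $x(t)$ or a linearized recursion) immediately gives stochastic domination by independent coins, so one can invoke the plain Chernoff bound of Theorem~\ref{thm:Chernoff} without any martingale machinery. Second, the last-crossing-time decomposition handles all target times $t$ uniformly and sidesteps the dyadic chaining (and the attendant inductive hypothesis on $x(2^k)$) that your outline would require. Your supermartingale/block approach is a reasonable alternative, but the step ``show $\mathbb P(\tau_k<2^{k+1})=O(2^{-2k})$'' needs an explicit mechanism to decouple the dependent increments, and the cleanest such mechanism is precisely the domination argument the paper uses.
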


\begin{lemma}[Extensions of Lemma~\ref{general}]\label{extensions} 
Lemma~\ref{general} can be extended in a couple of ways as follows. 
\begin{enumerate}
\item[\textup{(a)}] If the hypothesis $X(t+1)-X(t) \in \{0,1\}$ in Lemma~\ref{general} is replaced with $X(t+1)-X(t) \in \{-1,1\}$, then the conclusion of Lemma~\ref{general} still holds. This follows from minor modifications in the proof of Lemma~\ref{general}.
\item[\textup{(b)}] If the inequality sign in \ref{rec;ineq} is replaced with an equality sign, .i.e. under the condition $\mean{x(t+1) - x(t) \mid \circ} = \frac{h(x(t))}{t} +O(t^{-2})$, we have the following conclusion:
for any $\ga<1/3$, almost surely
\[
\lim_{t\to\infty} t^{\ga} (x(t)-\rho)=0. 
\]
\begin{proof}[Proof of \textup{(b)}]
First of all, by Lemma~\ref{general}, we have $\lim_{t\to\infty} t^{\ga}\max\{0,x(t)-\rho\}=0$ almost surely.
Second, let $g(z) = -h(1-z)$, so that $g(0) >0$ and $g(1)<0$, and  in $(0,1)$, we have $g'(z) = h'(1-z)<-1$. Letting $y(t) = 1-x(t)$,
\begin{align*}
\mean{y(t+1) -y (t) \mid \circ} &= \mean{(1-x(t+1)) -(1- x(t)) \mid \circ} \\
&= -\frac{h(x(t))}{t} +O(t^{-2})\\
&= \frac{g(y(t))}{t} +O(t^{-2}).
\end{align*}
Applying Lemma~\ref{general} with $X_1(t) = t-X(t)$, and then switching back to $X(t)$, we see 
that $\lim_{t\to\infty} t^{\ga}\max\{0, \rho-x(t)\}=0$ almost surely, as well.
\end{proof}
\end{enumerate}
\end{lemma}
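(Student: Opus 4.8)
The plan is to dispatch the two items separately: item (a) is where the new work lies, and item (b) is an immediate consequence of two invocations of Lemma~\ref{general}.

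For item (a) I would reopen the proof of Lemma~\ref{general} and verify that it uses nothing about the increments beyond (i) the drift recursion \eqref{rec;ineq}, (ii) the boundedness of $x(t)$ and of the one-step increments $x(t+1)-x(t)$, and (iii) Chernoff concentration (Theorem~\ref{thm:Chernoff}) for the partial sums of $X$ over dyadic time windows. Ingredient (i) is a statement about conditional expectations only and is worded identically in the two regimes, so it is untouched. For (ii), with $X(0)=0$ and $\pm1$ steps we still have $|X(t)|\le t$, hence $x(t)\in[-1,1]$ and $|x(t+1)-x(t)|=\bigl|\tfrac{X(t+1)-X(t)}{t+1}-\tfrac{x(t)}{t+1}\bigr|\le\tfrac{2}{t+1}=O(t^{-1})$, exactly the bound used before; and since the conclusion only concerns $\max\{0,x(t)-\rho\}$, it suffices to control $x(t)$ on the event $\{x(t)>\rho\}\subseteq\{x(t)\in[\rho,1]\}$, where $h$ and the hypothesis $h'<-1$ are available. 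For (iii), write $X(t+1)-X(t)=2B_t-1$ with $B_t\in\{0,1\}$; then for $t_1<t_2$ we have $X(t_2)-X(t_1)=2\sum_{u=t_1}^{t_2-1}B_u-(t_2-t_1)$, so $X(t_2)-X(t_1)$ is an affine image of a sum of Bernoulli random variables and every Chernoff estimate used in Lemma~\ref{general} transfers verbatim, the threshold and the mean being rescaled only by the harmless factor $2$. With these three observations every step of the proof of Lemma~\ref{general} goes through after trivially adjusting constants; this is the ``minor modification'' asserted.

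For item (b), under the equality hypothesis $\mean{x(t+1)-x(t)\mid\circ}=\tfrac{h(x(t))}{t}+O(t^{-2})$, Lemma~\ref{general} applied directly to $X(t)$ gives $t^{\ga}\max\{0,x(t)-\rho\}\to0$ a.s. For the matching lower tail put $g(z):=-h(1-z)$, a continuous strictly decreasing function with $g(0)=-h(1)>0$, $g(1)=-h(0)<0$, and $g'(z)=h'(1-z)<-1$ on $(0,1)$, so $g$ has unique root $1-\rho\in(0,1)$. The process $X_1(t):=t-X(t)$ satisfies $X_1(0)=0$ and has increments in $\{0,1\}$, and $y(t):=x_1(t)=1-x(t)$ obeys $\mean{y(t+1)-y(t)\mid\circ}=-\mean{x(t+1)-x(t)\mid\circ}=\tfrac{g(y(t))}{t}+O(t^{-2})$; hence Lemma~\ref{general} applied to $X_1$ yields $t^{\ga}\max\{0,y(t)-(1-\rho)\}=t^{\ga}\max\{0,\rho-x(t)\}\to0$ a.s. Adding the two one-sided statements gives $t^{\ga}|x(t)-\rho|\to0$ a.s., which is item (b).

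The only genuine point requiring care is the claim underlying (a): that the proof of Lemma~\ref{general} never secretly exploits the monotonicity of $t\mapsto X(t)$, only the boundedness of its increments together with the drift recursion. The affine rewriting $X(t+1)-X(t)=2B_t-1$ is precisely the device that removes this concern, since it reduces any concentration or telescoping estimate for $X$ to one for the partial sums of the Bernoulli sequence $\{B_t\}$, to which the original argument applies unchanged up to constants. Once this is checked, (a) is immediate and (b) follows from the two applications of Lemma~\ref{general} above.
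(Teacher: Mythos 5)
Your proof is correct and follows essentially the same route as the paper: for part (b) you use the identical substitution $y(t)=1-x(t)$, $g(z)=-h(1-z)$, $X_1(t)=t-X(t)$ and the same two applications of Lemma~\ref{general}, and for part (a) you flesh out the ``minor modifications'' that the paper merely asserts, via the reduction $X(t+1)-X(t)=2B_t-1$ (the only imprecision is calling the resulting change a ``rescaling by $2$''; it is an affine change $X(t_2)-X(t_1)=2\sum B_u-(t_2-t_1)$, but this is still harmless for the Chernoff estimate, which only needs a deviation of order $\eps(t-T)$ from a mean of order $t-T$).
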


\medskip
\begin{proof}[Proof of Lemma~\ref{general}]
Let $\eps=\eps_t:=t^{-1/3}\log t$.  We will show
\begin{equation}\label{x_t<rho+eps}
\mathbb P(x(t)>\rho+\eps) \le \exp\brac{-\Theta\brac{\log^3t}}.
\end{equation}
Once we show~\eqref{x_t<rho+eps}, the Borel-Cantelli lemma gives 
\[
\mathbb P(x(t)-\rho >  t^{-1/3}\log t \quad \text{infinitely often})=0,
\]
which proves what we want. Let us prove~\eqref{x_t<rho+eps}. 

For $T\in [0,t)$, let $\mathcal E_T$ be the event that \{$x(t)> \rho+\eps$ and $T$ is the last time such that $X(\tau) \le (\rho+\eps/2)\tau$\}, that is, 
\[
X(T)\le (\rho+\eps/2)T; \quad x(\tau)> \rho+\eps/2\tau, \,\, \forall\, \tau\in (T,t); \quad x(t) >\rho+\eps.
\]
Since $X(t+1)-X(t) \in \{0,1\}$, we have
\begin{align*}
X(T)+t-T\ge X(t)>t(\rho+\eps).
\end{align*}
Using $X(t) = tx(t)$ above , we get
\[
T(\rho+\eps/2)+t-T>t(\rho+\eps),
\]
implying
\begin{equation}\label{ubT}
t-T > \frac{t\eps}{2(1-\rho)}.
\end{equation}
We conclude that 
\[
\{x(t)>\rho+\eps\}\subseteq \bigcup_{T=1}^s \mathcal E_T,\quad s=s(t):= t-\Big\lceil\frac{t\eps}{2(1-\rho)}\Big\rceil.
\]

Now let us fix a $T\in [0,s]$ and bound  $\mathbb  P(\mathcal E_T)$. 
The main idea of the proof is that, as long as $x(\tau)>\rho$, by Equation~\eqref{rec;ineq}, the process $\{x(\tau)\}$ has a negative drift.

Let $\xi_\tau$ denote the indicator of the event $\{x(\tau-1)>\rho+\eps/2 \text{ and } X(\tau)=X(\tau-1)+1\}$ and let  $\mathcal Z_T:=\xi_{T+2}+\cdots+\xi_t$.
On the event $\mathcal E_T$, the sum $\mathcal Z_T$ counts the total number of upward unit jumps ($X(\tau)-X(\tau-1)=1$,
$\tau\in [T+2,t]$) and therefore
\[
X(T+1) + \mathcal Z_T = X(t)\ge t(\rho+\eps).
\]
Since $X(T+1) \le X(T)+1\le T(\rho+\eps/2)+1$, we must have
\begin{equation*}
Z_T> (\rho+\eps)(t-T),\quad Z_T:=1+\mathcal Z_T.
\end{equation*}
Writing $p(x(\tau)):= \pr{X(\tau+1) = X(\tau)+1 \mid \circ}$,
\begin{align*}
\mean{x(\tau+1) \mid \circ} &= p(x(\tau)) \frac{X(\tau)+1}{\tau+1} + (1-p(x(\tau))) \frac{X(\tau)}{\tau+1} \\
&=  \frac{p(x(\tau))}{\tau+1} + \frac{\tau x(\tau)}{\tau+1} \\
& = x(\tau) + \frac{p(x(\tau)) - x(\tau)}{\tau+1},
\end{align*}
so that $p(x(\tau)) = h(x(\tau)) + x(\tau) + O(\tau^{-1})$.

Recall that, for $\tau \ge T+1$, we have $x(\tau) > \rho +\eps/2$. Since $h'(x)<-1$ in $(0,1)$, the sum $h(x) + x$ is decreasing in $(0,1)$. Hence, conditioning on the full record (up to and including time $\tau$),
\begin{align*}
\mathbb P(\xi_{\tau+1}=1|\circ) &=\mathbb P(X(\tau+1)=X(\tau)+1\, |\,\circ)\\
&= h(x(\tau)) + x(\tau) +O\brac{\tau^{-1}} \\
&< h(\rho +\eps/2) + \rho + \eps/2 + O\brac{\tau^{-1}}\\
&= h(\rho) + (\eps/2)\cdot h'(y) +\rho +\eps/2 + O\brac{\tau^{-1}} \quad \text{for some }y\in (\rho,\rho+\eps/2)\\
& < \rho + O\brac{\tau^{-1}}.
\end{align*}
Hence, the sequence $\{\xi_\tau\}$ is stochastically dominated by the sequence of {\it independent\/} Bernoulli random variables $B_\tau$ with parameters $\min\bigl(\rho+O(\tau^{-1}),1\bigr)$.
Consequently, $Z_T$ is stochastically dominated by $1+\sum_{j=T+2}^t B_j$, and
\[
\la:=\sum_{j=T+2}^t \mathbb E[B_j]=\rho(t-T) +O(\log t).
\]
For the choice of $\eps$ we have, \eqref{ubT}  gives
\[
(\rho+\eps)(t-T) \ge (1+\eps/2)\lambda.
\]
Thus, by the Chernoff bound in Theorem~\ref{thm:Chernoff} and using~\eqref{ubT},
\begin{align*}
\pr{\mathcal{E}_T} 
&\le \mathbb P(Z_T>(t-T)(\rho+\eps)) \\
&\le \mathbb P\Big(1+B_{T+2}+\cdots+B_t>(t-T)(\rho+\eps)\Big) \\
&\le \mathbb P\brac{1+B_{T+2}+\cdots+B_t> (1+\eps/2)\lambda} \\
&\le \exp\brac{-\Theta(\eps^2(t-T))}\le  e^{-\Theta(\log^3 t)}.
\end{align*}
Using the union bound on $T$ we complete the proof of~\eqref{x_t<rho+eps} and of the lemma.
\end{proof}

\section{Greedy Matching Algorithm} %Proof of Theorem \ref{th3}} \label{sec:matching}
Recall that the greedy matching algorithm (for either of two graph models) generates the increasing sequence $\{ M(t)\}$ of partial matchings on the sets $[t]$, with $ M(1)=\emptyset$. Given $M(t)$, let
\begin{align*}
X(t)&:= \text{number of unmatched vertices at time }t,\\
Y(t)&:= \text{total degree of unmatched vertices at time }t,\\
U(t)&:=\text{number of unmatched vertices selected by $t+1$  from $[t]\setminus M(t)$},\\
x(t)&:=X(t)/t,\\
y(t)&:=Y(t)/(2mt).
\end{align*}

\subsection{The PAM case} 

\begin{customthm}{2.5}
Let $X(t)$ be the number of unmatched vertices at time $t$ in the greedy matching algorithm.
For $\delta>-m$, let $\rho_{m,\delta}$ be  the unique root in $(0,1)$ of
\begin{equation}\label{defh}
h(z)=h_{m,\de}(z):= 2\left[1-\bfrac{m+\de}{2m+\de}z\right]^m-z-1.
\end{equation}
Then, for any $\a<1/3$, almost surely,
\beq{th3repeatconc}
{
\lim_{t\to\infty} t^{\a}\max\{0, x(t)-\rho_{m,\delta}\}=0.
}
In consequence, the Greedy Matching Algorithm a.s. finds a sequence of nested matchings $\{M(t)\}$, 
where the number of vertices in $M(t)$ is asymptotically at least $(1-\rho_{m,\delta})t$.
\end{customthm}

\begin{proof}
Notice first that, for $\delta>-m$, the function $h(z)$ is decreasing on $(0,1)$ and $h(z)=0$ does have a unique solution in the same interval. 

We will prove our claim first for a slightly different model that does not allow any loops other than at the first vertex. In this model, vertex $1$ has $m$ loops, and the $i$-th edge of vertex $t+1$ attaches to $u\in [t]$ with probability
\[
\frac{d_{t,i-1}(u)+\delta}{2mt+2(i-1)+t\delta}.
\]

\medskip
\noindent \textbf{Loops not allowed except at vertex 1.}
In this case, since each degree is at least $m$, we have $Y(t)\ge mX(t)$ and hence $y(t)\ge x(t)/2$.
Also, since
\[
X(t+1)=
\begin{cases}
X(t)+1 & \text{if } U(t)=0\\
X(t)-1 & \text{if } U(t)>0,
\end{cases}
\]
we have
\begin{equation}\label{meanX}
\mathbb E[X(t+1)|\circ]= X(t)+ \mathbb P(U(t)=0|\circ)-\mathbb P(U(t)>0|\circ).
\end{equation}
Since $\mathbb P(\text{vertex $t+1$ has some loop})= O(t^{-1})$, using $Y(t)\ge mX(t)$ in the last step below, 
by Lemma \ref{lem1} we get
\begin{align}\label{P(U=0)}
\mathbb P(U(t)=0|\circ)
&= \mathbb P(U(t)=0 \text{ and vertex $t+1$ has no loop} |\circ) +O\brac{t^{-1}}	\notag\\
&= \brac{1-O\brac{t^{-1}}}\frac{(2mt-Y(t)+\de t-\de X(t))^{(m)}}{(2mt+\de t)^{(m)}}+O\brac{t^{-1}}	\notag\\
&=\frac{(2mt+\de t-Y(t)-\de X(t))^{m}}{(2mt+\de t)^{m}}+O\brac{t^{-1}}	\notag\\
&=\brac{1-\frac{2m}{2m+\de}\,y(t)-\frac{\de}{2m+\de}\,x(t)}^m+O\brac{t^{-1}}	 \\
&\le \left(1-\frac{m+\de}{2m+\de}\,x(t)\right)^m+ O(t^{-1}). \notag  
\end{align}
Using \eqref{meanX} and \eqref{P(U=0)} gives
\begin{align}\label{meanXupper}
\mathbb E[x(t+1)|\circ] &\le x(t)+ \frac1t\left[2\brac{1-\frac{m+\de}{2m+\de}\,x(t)}^m-x(t)-1\right] +O\brac{t^{-2}} \notag \\
&=x(t)+\frac{1}{t}h(x(t))+O\brac{t^{-2}},
\end{align}
where $h(z)$ is as defined in~\eqref{defh}.
Note that $X(0) = 0$ and $h(z)$ satisfies the conditions given in Lemma~\ref{general} and Lemma~\ref{extensions}, namely, $h(0)>0$, $h(1)<0$, and $h'(z) <- 1$ for $z\in (0,1)$.
The conclusion of the theorem follows from the first part of Lemma~\ref{extensions} in this case.

\medskip
\noindent \textbf{Loops allowed everywhere.} The above analysis is carried over to this more complicated case via an argument  similar to
the one for the descendant trees in the subsections 1.1.  Here is a proof sketch. First, the counterpart of \eqref{P(U=0)} is: 
\begin{equation*}
\begin{aligned}
&\mathbb P(\!\{U(t)=0\}\!\cap\! \{\text{no loops at }t+1\}|\circ)\\
&=\Pi_m(t)\prod_{j=0}^{m-1}\bfrac{2mt-Y(t)+\de t-\de X(t)+j}{2mt+2j+1+\de t+(j+1)\,\de/m}\\
&\le \Pi_m(t)\left[\left(1-\frac{m+\de}{2m+\de}\,x(t)\right)^m+ O(t^{-1})\right]\\
&=\bigl(1-O(t^{-1})\bigr)\left[\left(1-\frac{m+\de}{2m+\de}\,x(t)\right)^m+ O(t^{-1})\right]\\
&= \left(1-\frac{m+\de}{2m+\de}\,x(t)\right)^m+O(t^{-1});
\end{aligned}
\end{equation*}
see \eqref{Pmt=} for $\Pi_m(t)$. Therefore we obtain again \eqref{meanXupper}. The rest of the proof remains the same. 
\end{proof}

\begin{remark}
Let $r=r_{m,\de}:=1-\rho_{m,\de}$, where $\rho_{m,\de}$ is the unique root in $(0,1)$ of
\[
h(z)=h_{m,\de}(z):= 2\left[1-\bfrac{m+\de}{2m+\de}z\right]^m-z-1.
\]
Then, $r$ is the unique root in $(0,1)$ of
\[
f(z)=f_{m,\de}(z):= 2-z-2\brac{\frac{m}{2m+\de}+\frac{m+\de}{2m+\de}\,z}^m.
\]
Thus, by Theorem~\ref{th3}, we have
\[
\liminf (1-x(t)) \ge r
\]
almost surely, where $1-x(t)$ is the fraction of the vertices in $M(t)$. See~\eqref{rvalues} for various $r$ values when $\delta=0$.
\end{remark}

\begin{remark}
When $\delta \to \infty$, the function $f_{m,\delta}(z)$ as defined above converges to $2-z-2z^m$ in $(0,1)$. So it is plausible that for the case of uniform attachment model, the number of vertices in $M(t)$ is asymptotically $rt$, where $r$ is the unique root of $2-z-2z^m$. This is in fact the case as shown in the next theorem.
\end{remark}

\subsection{The UAM case}

\begin{customthm}{2.8}
Let $M(t)$ denote the greedy matching set after $t$ steps of the UAM process. 
Let $r_m$ denote a unique positive root of $2(1-z^m) -z=0$: $r_m=1-m^{-1}\log 2+O(m^{-2})$. 
Then, for any $\a<1/3$, almost surely
\[
\lim_{t\to\infty} t^{\a}\biggl| \frac{2|M(t)|}{t}-r_m\biggr|=0.
\]
\end{customthm}

\begin{proof}
Let $X(t) = t-2|M(t)|$ as before. In particular, we have $X(0)=0$ and $X(1)=1$.
At each step $t\ge 2$, we check the edges incident to vertex $t$.
If some of the edges end at vertices that do not belong to $M(t)$, then we choose the largest (youngest) of those vertices, say $w$, and set $M(t):=M(t-1)\cup \{(t,w)\}$ and $X(t)=X(t-1)-1$.  Otherwise, $M(t)=M(t-1)$ and $X(t)=X(t-1)+1$. Let $x(t)=X(t)/t$ be the fraction of unmatched vertices after step $t$. Then $X(t)$ is a Markov chain with
\[
\pr{X(t+1)-X(t) =1 \mid X(t)} = (1-x(t))^m
\]
since for $X(t+1)-X(t) =1$ to happen, each of the $m$ choices made by vertex $t+1$ must lie outside of $M(t)$, the probability of each such choice is $1-x(t)$, and the choices are independent of each other.
%
%\textbf{Explanation:}
%$a(t+1)-a(t)=1$ when vertex $t+1$ does not choose any of the vertices in $A(t)$, which happens with probability 
%$(1-a(t)/t)^k=(1-\xi(t))^m$. 
With the remaining probability vertex $t+1$ chooses at least one of $m$ vertices from $X(t)$, in which case $X(t)$ decreases by 1. Consequently,
\[
\mean{X(t+1)\big|\circ} = X(t) + (1-x(t))^m - (1-(1-x(t))^m) = X(t) +2(1-x(t))^m-1.
\]
Dividing both sides with $t+1$, we obtain
\[
\mean{x(t+1) | \circ} = x(t) + \frac{h(x(t))}{t+1} - O(1/t^2),
\]
where $h(z) = 2(1-z)^m-z-1$.
The function $h$ meets the conditions of the second part of Lemma~\ref{extensions}. 
Hence $\lim_{t\to\infty} t^{\ga}|x(t)-\rho_m| = 0$ a.s.
On the other hand, if $\rho_m$ is the unique root of $h$ in $(0,1)$, then $r_m:=1-\rho_m$ is the unique root of $2(1-z^m)-z$ in $(0,1)$.
Since $x(t)-\rho_m = (1 -2 |M(t)|/t) - (1-\rho_m)  = \rho_m -2 |M(t)|/t$, we also have, a.s.
\[
\lim_{t\to\infty} t^{\ga}\Big|2|M(t)|/t-r_m\Big| = 0.
\]

%Therefore, for any $\ga<1/3$, a.s. $\lim_{t\to\infty} t^{\ga}\max\{0,x(t)-r_m\}=0$. Now $g(z):=-f(1-z)$ also meets the conditions of Lemma \ref{general}. So applying Lemma~\ref{general}
%to $1-x(t)$, we get that $\lim_{t\to\infty} t^{\ga}\max\{0,1-x(t)-(1-r_m)\}=0$ as well.
%\begin{lemma}
%The equation is a particular case of an equation
%\begin{equation}\label{gen1}
%\Bbb E[u(t+1)|\circ]=u(t)+\frac{f(u(t))}{t+1} +O(t^{-2}),
%\end{equation}
%where $|u(j)|\le \a$, $|(j+1)u(j+1)-ju(j)|\le \a_1$, $f(u)$ is concave, $f'(0)<0$, $f(0)>0$, $f(\a)<0$. Under these conditions
%on $f$, the equation $f(\xi)=0$ has a unique solution $\rho$ in $(0,\a)$.
%\begin{lemma}\label{genlem} Suppose that $|f'(0)|>1/2$. For $\gamma<1/3$, a.s. $\lim_{t\to\infty} t^{\gamma}|u(t)-\rho|=0$.
%\end{lemma}
This  completes the proof.
\end{proof}

\section{Analysis of Greedy Independent Set Algorithm} The algorithm, for both PAM and UAM cases, generates the increasing
sequence  of independent sets $\{I(t)\}$ on the sets $[t]$, with $I(1):=\{1\}$. If vertex $t+1$ does not select a single vertex from $t$, we set $I(t+1)=I(t)\cup \{t+1\}$; otherwise $I(t+1):= I(t)$. Given $I(t)$, let 
\begin{align*}
X(t)&:= \text{number of vertices }\le t \text{ outside of the current independent set } I(t),\\
Y(t)&:=\text{total degree of these outsiders} ,\\
Z(t)&:=\text{number of insiders selected by outsiders by time }t,\\
U(t)&:=\text{number of insiders selected by vertex } t+1,\\
x(t) &:=\frac{X(t)}{t},\quad y(t):=\frac{Y(t)}{2mt},\quad z(t)=\frac{Z(t)}{mt}, \quad i(t)=\frac{|I(t)|}{t}.
\end{align*}
Since each insider selects only among outsiders, the total degree of insiders  is $m|I(t)| +Z(t)$ and
\[
Y(t)=2mt- m|I(t)|-Z(t)\Longrightarrow y(t)=1- (i(t)+z(t))/2.
\]

\subsection{The PAM case} 

\begin{customthm}{2.9} Let $w_m$ denote the unique root of $-w+(1-w)^m$ in $(0,1)$. For any 
$\chi\in \Bigl(0,\min\Bigl\{\frac{1}{3},\frac{2m+2\delta}{3(2m+\delta)}\Bigr\}\Bigr)$, almost surely
\begin{equation}\label{PAM0}
\lim_{t\to\infty}t^{\chi}\biggl|\frac{|I(t)|}{t}-w_m\biggr|=0.
\end{equation}
\end{customthm}

\begin{proof} By the definition of the algorithm, we have
\[
|I(t+1)|=\left\{\begin{aligned}
&|I(t)|+1,&&\text{with probability }\Bbb P(U(t)=0|\circ),\\
&|I(t)|,&&\text{with probability }\Bbb P(U(t)>0|\circ).\end{aligned}\right.
\]
Now $U(t)=0$ means that vertex $t+1$ selects all $m$ vertices from the outsiders set, so that
\begin{align*}
\Bbb P(U(t)=0|\circ)&=\frac{\bigl(Y(t)+\delta X(t)\bigr)^{(m)}}{\bigl((2m+\delta)t\bigr)^{(m)}}+O(t^{-1})
=\biggl(\frac{2my(t)+\delta x(t)}{2m+\delta}\biggr)^m +O(t^{-1})\\
&=\biggl(1-\frac{m+\delta}{2m+\delta}\, i(t)-\frac{m}{2m+\delta}\,z(t)\biggr)^m +O(t^{-1}).
\end{align*}
The leading term in the first equality is the exact (conditional) probability of $\{U(t)=0\}$ when no loops at vertices other than the first vertex are allowed, and the extra $O(t^{-1})$ is for our, more general, case when the loops are admissible. 
So using $i(t)=|I(t)|/t$, we have 
\begin{multline}\label{PAM6}
\Bbb E[i(t+1)|\circ]=i(t)
+\frac{1}{t+1}\biggl[-i(t)
+\biggl(1-\frac{m+\delta}{2m+\delta} i(t)-\frac{m}{2m+\delta}z(t)\biggr)^m\biggr]+O(t^{-2}).
%\biggl(\frac{y(t)+x(t)\delta/2m}{1+\delta/2m}\biggr)^m\biggr]+O(t^{-2}).
\end{multline}
Next $Z(t+1)=Z(t)+U(t)$, so that
\begin{align*}
\Bbb E[Z(t+1)|\circ]&= Z(t) +\Bbb E[U(t)|\circ]\\
&=Z(t)+m\frac{I(t)(m+\delta)+Z(t)}{(2m+\delta)t}+O(t^{-1}),
\end{align*}
and using $z(t)=Z(t)/mt$, we have
\begin{align}\label{PAM7}
\Bbb E[z(t+1)|\circ]=z(t) +\frac{1}{t+1}\biggl[-z(t) + \biggl(i(t)\frac{m+\delta}{2m+\delta}+z(t)\frac{m}{2m+\delta}\biggr)
\biggr] +O(t^{-2}).
\end{align}
%{\bf Case $\delta=0$.\/}  
Introduce $w(t)=i(t)\frac{m+\delta}{2m+\delta}+z(t)\frac{m}{2m+\delta}$.  Multiplying  the equations \eqref{PAM6}
and \eqref{PAM7} by $\frac{m+\delta}{2m+\delta}$ and by $\frac{m}{2m+\delta}$ and adding the products, we obtain 
\begin{equation}\label{PAM8}
\begin{aligned}
\Bbb E[w(t+1)|\circ]&=w(t)+\frac{f(w(t))}{t+1}+O(t^{-2}),\\
f(w)&:=\frac{m+\delta}{2m+\delta}\bigl[-w+(1-w)^m\bigr].
\end{aligned}
\end{equation}
The function $f$ is qualitatively similar to the function $h$ in the proof of Theorem \ref{th3repeat}.
Indeed $f(w)$ is strictly decreasing, with $f(0)=1$ and $f(1)=-1$.
Therefore $f(w)$ has a unique root $w_m\in (0,1)$; it is not difficult to see that
\[
w_m=\frac{\log m}{m}\Bigl[1+O(\log\log m/\log m)\Bigr],\quad m\to\infty.
\]
Let us prove that, for any $\a< c(m,\delta):=\min\Bigl\{1,\frac{2m+2\delta}{2m+\delta}\Bigr\}(\subseteq (0,1])$, and $A\ge A(\a)$, we have
\begin{equation}\label{PAM8.1}
\Bbb E\bigl[(w(t)-w_m)^2\bigr]\le A t^{-\a},
\end{equation}
First
\begin{equation}\label{PAM8.15}
\begin{aligned}
|i(t+1)-i(t)|&=\left|\frac{|I(t+1)|}{t+1}-\frac{|I(t)|}{t}\right|\\
&\le \frac{1}{t+1}\bigl(|I(t+1)|-|I(t)|\bigr) +\frac{|I(t+1)|}{t+1}\le \frac{2}{t+1},
\end{aligned}
\end{equation}
and similarily
\begin{equation*}
|z(t+1)-z(t)|\le \frac{2}{t+1}.
\end{equation*}
Therefore $|w(t+1)-w(t)|\le 2/(t+1)$, and consequently
\[
\bigl(w(t+1)-w_m\bigr)^2\le \bigl(w(t)-w_m\bigr)^2+\frac{4}{t^2}+2(w(t)-w_m) \bigl(w(t+1)-w(t)\bigr).
\]
So, conditioning on prehistory, we have 
\begin{multline*}
\Bbb E\bigl[(w(t+1)-w_m)^2|\circ]\le \bigl(w(t)-w_m\bigr)^2\\
+2\bigl(w(t)-w_m\bigr) \Bbb E\bigl[w(t+1)-w(t)|\circ] +O(t^{-2})\\
=\bigl(w(t)-w_m\bigr)^2 +\frac{2\bigl(w(t)-w_m\bigr)}{t+1} f(w(t))+O(t^{-2}).
\end{multline*}
By \eqref{PAM8}, $f'(w)\le -\frac{m+\delta}{2m+\delta}$; therefore, with $c(m,\delta)=\frac{2m+2\delta}{2m+\delta}$, the last inequality gives
\[
\Bbb E\bigl[(w(t+1)-w_m)^2|\circ]\le \left(\!1-\frac{c(m,\delta)}{t+1}\right)\!\bigl(w(t)-w_m\bigr)^2 +O(t^{-2}),
\]
leading to a recursive inequality 
\begin{equation}\label{PAM8.2}
\Bbb E\bigl[(w(t+1)-w_m)^2]\le \left(\!1-\frac{c(m,\delta)}{t+1}\right)\!\Bbb E\bigl[(w(t)-w_m)^2\bigr] +O(t^{-2}).
\end{equation}
The bound \eqref{PAM8.1} follows from \eqref{PAM8.2} by a straightforward induction on $t$.
%By induction on $t$
%Consequently, by Lemma\ref{general}, we have: for every $\ga<1/3$, a.s.
%\[
%\lim_{t\to\infty} t^{\ga}\max\{0, w_m-w(t)\}=0.
%\]
%Introduce $\eta(t)=1-w(t)$;  we have then
%\begin{align*}
%\Bbb E[\eta(t+1)|\circ]&=\eta (t)+\frac{g(\eta(t))}{t+1}+O(t^{-2}),\\
%g(\eta)&:=-f(1-\eta)=-\frac{m+\delta}{2m+\delta}\bigl[-1+\eta+\eta^m\bigr].
%\end{align*}
%Now $g(\eta)$ is decreasing too; so again by Lemma \ref{general}, we obtain that a.s.
%\[
%\lim_{t\to\infty}t^{\ga}\max\{0,(1-w_m)-\eta(t)\}= 0\,\, \text{ i.e. }\,\,
%\lim_{t\to\infty} t^{\ga}\max\{0, w(t)-w(m)\}=0,
%\]
%as well. Thus, for all $\ga<1/3$,  a.s.
%\begin{equation}\label{PAM10}
%\lim_{t\to\infty} t^{\ga} |w(t)-w_m|=0.
%\end{equation}
%Using \eqref{PAM10}, we show that, for all $\a<2/9$, a.s.
%\begin{equation}\label{PAM11}
%\lim_{t\to\infty} t^{\a} |i(t)-w_m|=0,\quad (i(t)=|I(t)|/t).
%\end{equation}
Next we use \eqref{PAM8.1} to  prove that, for $A_1$ large enough,
\begin{equation}\label{PAM8.21}
\Bbb E\bigl[(i(t+1)-w_m)^2|\circ]\le  A_1 t^{-\a}.
\end{equation}
Using \eqref{PAM8.15}, we have
%\begin{equation}\label{PAM12}
%|i(t+1)-i(t)|\le\frac{|I(t+1)|-|I(t)|}{t+1}+\frac{|I(t)|}{t(t+1)}\le \frac{2}{t+1}.
%\end{equation}
%Therefore
\[
\bigl(i(t+1)-w_m\bigr)^2\le \bigl(i(t)-w_m\bigr)^2+\frac{4}{t^2}+2(i(t)-w_m) \bigl(i(t+1)-i(t)\bigr).
\]
Consequently
\begin{multline*}
\Bbb E\bigl[(i(t+1)-w_m)^2|\circ]\le \bigl(i(t)-w_m\bigr)^2\\
+2\bigl(i(t)-w_m\bigr) \Bbb E\bigl[i(t+1)-i(t)|\circ] +O(t^{-2})\\
=\bigl(i(t)-w_m\bigr)^2 +\frac{2\bigl(i(t)-w_m\bigr)}{t+1} \bigl[-i(t)+(1-w(t))^m\bigr]+O(t^{-2}).
\end{multline*}
Taking expectations of both sides, and using Cauchy's inequality and \eqref{PAM8.1}, we obtain
\begin{multline*}
\Bbb E\bigl[(i(t+1)-w_m)^2]\le \biggl(\!1-\frac{2}{t+1}\biggr)\Bbb E[(i(t)-w_m)^2]\\
+\frac{2}{t+1}\Bbb E^{1/2}\bigl[(i(t)-w_m)^2]\,\Bbb E^{1/2}\bigl[(w_m-(1-w(t))^m)^2\bigr]+O(t^{-2})\\
\le \biggl(\!1-\frac{2}{t+1}\biggr)\Bbb E[(i(t)-w_m)^2] +\frac{2mA^{1/2}}{t^{1+\a/2}}\Bbb E^{1/2}\bigl[(i(t)-w_m)^2\bigr]
+O(t^{-2}),
\end{multline*}
since $w_m=(1-w_m)^m$, and 
\[
\big|(1-w(t))^m - (1-w_m)^m\big|\le m|w(t)-w_m|.
\]
So it suffices to show existence of $A_1$ such that 
\[
\biggl(\!1-\frac{2}{t+1}\biggr) A_1 t^{-\a}+\frac{2mA^{1/2} A_1^{1/2}t^{-\a/2}}{t^{1+\a/2}}+O(t^{-2})\le A_1 (t+1)^{-\a}.
\]
holds for $t>t_0$ where $t_0$ depends only on $A$ and $\a$. For large $t$, the above inequality becomes
\[
2mA^{1/2}A_1^{1/2}+O(t^{-1-\a})\le A_1\bigl[(2-\a)+O(t^{-1})\bigr]+O(t^{-1+\a}),
\]
and $A_1> A\Bigl(\frac{2m}{2-\a}\Bigr)^2$ does the job. So \eqref{PAM8.21} is proved.
Therefore, by Markov inequality,
\begin{equation}\label{0.99}
\Bbb P\bigl(|i(t)-w_m|\ge t^{-\chi}\bigr)\le At^{-\a+2\chi}\to 0,\quad\chi\in (0,\a/2).
\end{equation}
This inequality already means that $i(t)\to  w_m$ {\it in probability\/}. Let us show that, considerably stronger, $i(t)\to w_m$ with probability $1$, at least as fast as $t^{-\chi}$, for any given $\chi<1/3$.
Pick $\be>1$ and introduce a sequence $\{t_{\nu}\}$, $t_{\nu}=\lfloor \nu^{\be}\rfloor$. By \eqref{0.99}, we have
\begin{align*}
\sum_{\nu\ge 1}\Bbb P\bigl(|i(t_{\nu})-w_m|\ge t_{\nu}^{-\chi}\bigr)&\le \sum_{\nu\ge 1}A_1t_{\nu}^{-\a+2\chi}
=O\biggl(\sum_{\nu\ge 1}\nu^{-\be (\a-2\chi)}\biggr)<\infty,
\end{align*}
provided that $\be>(\a-2\chi)^{-1}$, which we assume from now. For such choice of $\be$, by Borel-Cantelli lemma
with probability $1$ for all but finitely many $\nu$ we have $|i(t_{\nu})-\rho|\le t_{\nu}^{-\chi}$. Let 
$t\in [t_{\nu}, t_{\nu+1}]$. By \eqref{PAM8.15}, we have
\[
|i(t)-i(t_{\nu}))|=O\Bigl(\frac{t_{\nu+1}-t_{\nu}}{t_{\nu}}\Bigr),
\]
uniformly for all $\nu$.
So if $|i(t_{\nu})-\rho|\le t_{\nu}^{-\chi}$, then (using $t_{\nu}=\Theta(\nu^{\be})$), we have: for $t\in [t_{\nu},t_{\nu+1}]$,
\begin{align*}
|i(t)-w_m|&\le t_{\nu}^{-\chi}+O\biggl(\frac{t_{\nu+1}-t_{\nu}}{t_{\nu}}\biggr)\\
&=O\bigl(\nu^{-\be\chi}+\nu^{-1}\bigr)=O\bigl(\nu^{-\min(\be\chi,1)}\bigr)\\
&=O\bigl(t^{-\min(\chi,\be^{-1})}\bigr).
\end{align*}
Since $|i(t_{\nu})-w_m|\le t_{\nu}^{-\chi}$ holds almost surely (a.s.) for all but finitely many $\nu$'s, we see then
that a.s. so does the bound $|i(t)-w_m|=O\bigl(t^{-\min(\chi,\be^{-1})}\bigr)$ for all but finitely
many $t$'s. Now by taking $\be$ sufficiently close to $(\a-2\chi)^{-1}$ from above, we can make $\min(\chi,\be^{-1})$
arbitrarily close to $\min(\chi, \a-2\chi)$ from below. It remains to notice that $\min(\chi, \a-2\chi)$ attains its
maximum $\a/3$ at $\chi=\a/3$. The proof of Theorem \ref{thm5} is complete.
\end{proof}

\subsection{The UAM case}

\begin{customthm}{2.11} Let $w_m$ be the unique root of $-w+(1-w)^m$ in $(0,1)$. Then, for any $\a<1/3$, almost surely
\[
\lim_{t\to\infty}t^{\a}\Big|\frac{|I(t)|}{t} - w_m\Big|=0.
\]
\end{customthm}

This following remark is already stated as Remark~\ref{w_m}.
\begin{remark} 
Thus, the convergence rate aside, almost surely the greedy independent algorithm delivers a sequence of independent sets of asymptotically the same size as for the PAM case.
\end{remark}

\begin{proof} Let $i(t)=|I(t)|/t$. From the definition of the UAM process and the greedy independent set algorithm, we obtain
\[
|I(t+1)|=\left\{\begin{aligned}
&|I(t)|+1,&&\text{with conditional probability } (1-i(t))^m,\\
&|I(t)|,&&\text{with conditional probability } 1-(1-i(t))^m.\end{aligned}\right.
\]
Therefore
\[
\Bbb E\Bigl[|I(t+1)|\big|\circ\Big]=|I(t)| +(1-i(t))^m,
\]
or equivalently
\[
\Bbb E[i(t+1)|\circ]=i(t)+\frac{1}{t+1}\bigl[-i(t)+(1-i(t))^m\bigr].
\]
The function $-x+(1-x)^m$ differs by a constant positive factor from the function $f$ in \eqref{PAM8}. 
The function $f$ meets the conditions of Lemma \ref{general}, and $r_m$ is a unique root of $f$.
%Therefore, for any $\ga<1/3$, a.s. $\lim_{t\to\infty} t^{\ga}\max\{0,\eta(t)-r_m\}=0$. Now $g(x):=-f(1-x)$ also meets the conditions of Lemma \ref{general}. So applying Lemma \ref{general}
%to $1-\eta(t)$, we get that $\lim_{t\to\infty} t^{\ga}\max\{0,1-\eta(t)-(1-r_m)\}=0$ as well. 
Therefore, for any $\a<1/3$, a.s. $\lim_{t\to\infty} t^{\a}|i(t)-r_m|=0$.
%the proof of Theorem \ref{thm5repeat}, whence of Theorem \ref{thm5}.
\end{proof}
%As in the proof of Theorem \ref{thm4repeat} (\ref{thm4}), the above recurrence implies: given $\be\in (0,1)$,
%\begin{align*}
%\Bbb E\bigl[(x(t+1)-\be)^2|\circ]&\le (x(t)-\be)^2+\frac{2}{t+1} \bigl(x(t)-\be) \Phi_m(x(t))+\frac{1}{(t+1)^2},\\
%\Phi_m(x)&:= 1 -x -x^m.
%\end{align*}
%$\Phi_m(x)$ is concave and it has a unique positive root $x_m\in (0,1)$, which decreases with $m$. Further following the proof of Theorem \ref{thm4repeat}, we conclude: for any $\a<1/3$, a.s.  
%\[
%\lim_{t\to\infty} t^{\a}\Big|\frac{x(t)}{t}-x_m\Big|=0.
%\]

\section*{Appendix}

In order to show that the coupling described in Section~\ref{sec:descendant} really works, we can compute the probability that the $i$-th edge of vertex $w_{t+1}$ connects to vertex $w_x$ in the coupling and compare it with the probability in~\eqref{defprob}. Let us denote by $\{G'_{m,\de}(t)\}$ the process obtained by collapsing the vertices of $\{G_{1,\de/m}(mt)\}$.  Note that $(mt+i)$-th edge of the $\{G_{1,\de/m}(mt)\}$-process becomes the $i$-th edge of $w_{t+1}$ after the collapsing.
Hence the $i$-th edge of vertex $w_{t+1}$ connects to $w_x$ ($x\le t$) if and only if the $(mt+i)$-th edge of $\{G_{1,\delta/m}(mt)\}$-process connects $v_{mt+i}$ with one of the vertices $v_{m(x-1)+1},\dots,v_{mx}$. 
Let us denote by $d_{mt+i-1}(v_y)$ the degree of $v_y$ ($y\le mt+i$) just before the $(mt+i)$-th edge of $\{G_{1,\de/m}\}$-process is drawn.
Also, let $D_{t,i-1}(w_x)$ denote the degree of $w_x$ at the exact same time.
Hence, by definition,
\[
D_{t,i-1}(w_x)=
\begin{cases}
\displaystyle \sum_{y=mx-m+1}^{mx} \ d_{mt+i-1}(v_y), & x\le t\\
\displaystyle \sum_{y=mt+1}^{mt+i}\  d_{mt+i-1}(v_y), & x=t+1.
\end{cases}
\]
By~\eqref{n1}, for $x\le t$, the probability that $v_{mt+i}$ connects to one of the vertices $v_{m(x-1)+1},\dots,v_{mx}$ (equivalently, the probability that the $i$-th edge of $w_{t+1}$ connects to $w_x$) is
\begin{align*}
\frac {\sum_{y=mx-m+1}^{mx} \ \brac{d_{mt+i-1}(v_y)+\de/m}} {(2+\delta/m)(mt+i-1)+1+\delta/m} &= \frac{\delta+ \sum_{y=mx-m+1}^{mx}\ d_{mt+i-1}(v_y)}{\delta(t+i/m)+2mt+2i-1}\\
&=\frac{\delta+ D_{t,i-1}(w_x)}{\delta(t+i/m)+2mt+2i-1}.
\end{align*}
Similarly, the probability that $v_{mt+i}$ selects one of $v_{mt+1},\dots,v_{mt+i}$ (equivalently, the probability that the $i$-th edge of $w_{t+1}$ is a loop) is
\[
 \frac {1+i\de/m+\sum_{j=1}^{i-1} d_{mt+i-1}(v_{mt+j})}  {\delta(t+i/m)+2mt+2i-1}= \frac {1+i\de/m+D_{t,i-1}(w_{t+1})}  {\delta(t+i/m)+2mt+2i-1}.
\]
Note that the two probabilities above are the same as those in~\eqref{defprob} if we replace $D_{t,i-1}(w_x)$ with $d_{t,i-1}(x)$. Moreover, the two processes, $\{G'_{m,\de}(t)\}$ and $\{G_{m,\de}(t)\}$ as defined by~\eqref{defprob}, both start with $m$ loops on the first vertex, which implies $d_{1,0}(\cdot)=D_{1,0}(\cdot)$. This gives us that $\{G_{m,\de}(t)\}$ and $\{G'_{m,\de}(t)\}$ are equivalent processes, that is, at every stage, they produce the same random graph.

\end{document}